\documentclass[10pt]{article}
\usepackage{amsmath,amssymb,amsthm,amscd}
\numberwithin{equation}{section}

\def\cE{{\mathcal E}}

\def\cK{{\mathcal K}}
\def\cL{{\mathcal L}}

\def\cO{{\mathcal O}}

\def\cX{{\mathcal X}}

\def\bB{{\mathbf B}}
\def\bG{{\mathbf G}}
\def\bL{{\mathbf L}}

\def\bD{{\mathbf D}}
\def\bE{{\mathbf E}}

\def\bF{{\mathbf F}}

\def\bT{{\mathbf T}}
\def\bF{{\mathbf F}}
\def\bJ{{\mathbf J}}
\def\bK{{\mathbf K}}

\def\bU{{\mathbf U}}
\def\bV{{\mathbf V}}

\def\bU{{\mathbf U}}
\def\bW{{\mathbf W}}

\def\bM{{\mathbf M}}

\def\SL{{\mathbf S}{\mathbf L}}
\def\gl{{\mathbf g}{\mathbf l}}

\def\QQ{{\mathbb Q}}

\def\CC{{\mathbb C}}

\newtheorem{prop}{Proposition}[section]
\newtheorem{theo}[prop]{Theorem}
\newtheorem{lemm}[prop]{Lemma}

\newtheorem{rema}[prop]{Remark}

\newtheorem{defi}[prop]{Definition}

\def\begeq{\begin{equation}}
\def\endeq{\end{equation}}

\def\lab{\ }
\def\lab{\label}

\title{K-stability implies CM-stability}
\author{Gang Tian\thanks{Supported partially by
grants from NSF and NSFC}
\\Beijing University and Princeton University
}
\date{}

\begin{document}

\maketitle
\tableofcontents

\section{Introduction}
In this paper, we prove that any polarized K-stable manifold is CM-stable.
%\footnote{ Both notions of the stability originated in\cite{tian97}.}
This has been known to me for quite a while, in fact, the case for Fano manifolds already appeared in \cite{tian12} and
our arguments for the proof here will follow the approach there.

Let $M$ be a projective manifold polarized by an ample line bundle $L$.
By the Kodaira embedding theorem, for $\ell$
sufficiently large, a basis of $H^0(M, L^\ell)$ gives an embedding
$\phi_\ell: M \mapsto \CC P^{N}$, where $N=\dim_\CC
H^0(M,L^\ell)-1$. Any other basis gives an embedding of the form
$\sigma\cdot \phi_\ell$, where $\sigma\in \bG=\SL(N+1, \CC)$. We fix such an embedding.

Let us recall the CM-stability which originated in \cite{tian97}.
It can be defined in terms of Mabuchi's K-energy:
\begin{equation}
\label{eq:cm-1}
\bM_{\omega_0}(\varphi)\,=\,-\frac{1}{V}\,\int_0^1 \int_M \varphi \,({\rm Ric}(\omega_{t\varphi}) - \mu \,\omega_{t\varphi} )\wedge \omega_{t\varphi}^{n-1} \wedge dt,
\end{equation}
where $\omega_0$ is a K\"aher metric with K\"ahler class $2\pi c_1(L)$ and
\begin{equation}\label{eq:mu}
\omega_\varphi\,= \,\omega_0 \,+\,\sqrt{-1}\,\partial\bar\partial \,\varphi\,
~~{\rm and}~~ \,\mu\,=\,\frac{c_1(M)\cdot c_1(L)^{n-1}}{c_1(L)^n}.
\end{equation}

Given an embedding $M\subset \CC P^N$ by $K_M^{-\ell}$, we have an induced function on $\bG\,=\,\SL(N+1,\CC)$ which acts on $\CC P^N$:
\begin{equation}\label{eq:cm-2}
\bF(\sigma)\,=\,\bM_{\omega_0}(\psi_\sigma),
\end{equation}
where $\psi_\sigma$ is defined by
\begin{equation}\label{eq:cm-3}
\frac{1}{\ell}\,\sigma^*\omega_{FS}\,=\,\omega_0\,+\, \sqrt{-1}\,\partial\bar\partial\, \psi_\sigma.
\end{equation}
Note that $\bF(\sigma)$ is well-defined since $\psi_\sigma$ is unique modulo addition of constants. Similarly, we can define
$\bJ$ on $\bG$ by
\begin{equation}\label{eq:cm-3'}
\bJ(\sigma)\, =\, \bJ_{\omega_0}(\psi_\sigma),
\end{equation}
where
\begin{equation}
\label{eq:cm-3''}
J_{\omega_0}(\varphi)\,=\, \sum_{i=0}^{n-1} \frac{i+1}{n+1} \int_M
\sqrt{-1} \, \partial \varphi\wedge \bar{\partial} \varphi \wedge
\omega_0^i\wedge \omega_\varphi^{n-i-1}.
\end{equation}
\begin{defi}
\label{defi:cm-1}
We call $M$ CM-semistable with respect to $L^{\ell}$ if $\bF$ is bounded from below and
CM-stable with respect to $L^{\ell}$ if $\bF$ bounded from below and is proper modulo $\bJ$, i.e.,
for any sequence $\sigma_i\in \bG$,
\begin{equation}\label{eq:cm-defi}
\bF(\sigma_i) \rightarrow\infty~{\rm whenever}~\inf_{\tau \in Aut_0 (M,L)}\bJ(\sigma_i\tau) \rightarrow \infty,
\end{equation}
where $Aut_0(M,L)$ denotes the identity component of the automorphism group of $(M,L)$.
If $Aut_0(M,L)$ is trivial, then \eqref{eq:cm-defi} simply means that $\bF(\sigma_i) \rightarrow\infty$ whenever $\bJ(\sigma_i) \rightarrow \infty$.

We say $(M,L)$ CM-stable (resp. CM-semistable) if $M$ is CM-stable (resp. CM-semistable) with respect to $L^{\ell}$ for all sufficiently large $\ell$.
\end{defi}
\begin{rema}
\label{rema:cm}
In \cite{tian97}, the CM-stability is defined in terms of
the orbit of a lifting of $M$ in certain determinant line bundle, referred as the CM-polarization.
Theorem 8.9 in \cite{tian97} states that such an algebraic formulation is
equivalent to the one in Definition \ref{defi:cm-1}. 
\end{rema}

The CM-stability of $(M, L)$ is directly related to the existence of K\"ahler metrics with constant scalar curvature and K\"ahler class $c_1(L)$.
When $M$ is a Fano manifold polarized by
the anti-canonical bundle $K_M^{-1}$, it follows from \cite{tian09} and the partial $C^0$-estimate that
{\it $M$ admits a K\"ahler-Einstein metric whenever it is CM-stable} (see \cite{tian12}). In general, we had proposed a similar program towards
the YTD conjecture: If $(M,L)$ is K-stable, then there is a K\"ahler metric with constant scalar curvature and K\"ahler class $2\pi c_1(L)$.

We say $\bM_{\omega_0}$ is proper on the space $P(M, \omega_0)=\{\varphi\in C^\infty\,|\,\omega_\varphi > 0\}$ if there is a function
$f$ bounded from below such that $\lim _{t\to\infty} f(t) = \infty$ and
\begin{equation}\label{eq:cm-defi}
\bM_{\omega_0}(\varphi) \,\ge\,\inf_{\tau \in Aut_0 (M,L)} f(\bJ_{\omega_0}(\varphi_\tau)),~~~\forall \varphi \in P(M,\omega_0),
\end{equation}
where $\varphi_\tau$ is given by $\tau^*\omega_\varphi = \omega_0+\sqrt{-1}\,\partial\bar\partial \varphi_\tau$.
It was conjectured (see [tian98]\footnote{In \cite{tian98}, we define the properness in the case that $Aut_0(M,L)$ is trivial. Also one can easily 
show that the property of properness is independent of the choice of $\omega_0$.})
that $M$ admits a K\"ahler metric of constant scalar curvature and K\"ahler class $2\pi c_1(L)$ if $\bM_{\omega_0}$ is proper on $P(M,\omega_0)$. 
We also conjecture that a version of partial $C^0$-estimate holds for K\"ahler metrics with K\"ahler class $2\pi c_1(L)$. If these conjectures can be verified,
we can solve the YTD conjecture.

\vskip 0.1in
Our main result of this paper is the following:

\begin{theo}
\label{th:main}
Let $(M,L)$ be a polarized projective manifold which is K-stable. Then $M$ is CM-stable with respect to any $L^\ell$ which is
very ample. In particular, $(M,L)$ is CM-stable.
\end{theo}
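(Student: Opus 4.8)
The plan is to verify the properness condition \eqref{eq:cm-defi} for $\bF$ by reducing the behavior of $\bF$ at infinity on $\bG$ to its behavior along one-parameter subgroups, where it is controlled by the Donaldson--Futaki invariants that K-stability governs. First I would record two structural facts about $\bF=\bM_{\omega_0}(\psi_\sigma)$. The first is invariance and convexity: since $\omega_{FS}$ is $U(N+1)$-invariant we have $\bF(k\sigma)=\bF(\sigma)$ for unitary $k$, so by the Cartan decomposition $\bF$ is determined by its restriction to rays $t\mapsto\exp(tA)$ with $A\in\mathfrak{sl}(N+1,\CC)$ Hermitian, and along each such ray $\bF$ is convex --- the finite-dimensional shadow of Mabuchi's geodesic convexity of the $K$-energy. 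The second is the asymptotic slope formula: as $t\to\infty$,
\begin{equation}\label{eq:slope}
\frac{d}{dt}\,\bF(\exp(tA))\ \longrightarrow\ w(A)\qquad(t\to\infty),
\end{equation}
where $w(A)$ is the generalized Futaki (equivalently CM) invariant of the test configuration for $(M,L^\ell)$ generated by $A$. This identity is the crux link between the analytic functional and the algebraic invariant, and I expect to establish it by expanding $\bF(\exp(tA))$ through the Chow form and hyperdiscriminant (the Tian--Paul expression for the $K$-energy on the Bergman space) and matching the linear coefficient with the CM weight. In parallel, $\bJ(\exp(tA))$ grows linearly in $t$ with slope a positive multiple of the norm of $A$ transverse to $\mathfrak{aut}(M,L)$, so that $\inf_\tau\bJ(\sigma_i\tau)\to\infty$ detects precisely escape in $\bG$ away from the automorphism directions.

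With these in hand the argument proceeds by contradiction. Suppose $\bF$ is not proper modulo $\bJ$: there is a sequence $\sigma_i\in\bG$ with $\inf_{\tau\in Aut_0(M,L)}\bJ(\sigma_i\tau)\to\infty$ while $\bF(\sigma_i)$ stays bounded above. After replacing $\sigma_i$ by $\sigma_i\tau_i$ to approach the infimum and using unitary invariance, I may assume $\sigma_i=\exp(A_i)$ with $A_i$ Hermitian traceless, and normalize so that $T_i:=\|A_i\|\to\infty$ while the directions $\hat A_i:=A_i/T_i$ converge to a limit $A_\infty$. Because $\bF(\sigma_i)=\bF(\exp(T_i\hat A_i))$ remains bounded as $T_i\to\infty$, convexity together with \eqref{eq:slope} forces the limiting ray in direction $A_\infty$ to have nonpositive asymptotic slope, producing a limiting test configuration $(\cX,\cL)$ with $w(A_\infty)\le 0$.

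K-stability then supplies the contradiction. By definition the Donaldson--Futaki invariant of $(\cX,\cL)$ is nonnegative, and vanishes only for a product configuration, i.e.\ only when $A_\infty\in\mathfrak{aut}(M,L)$. Thus either $w(A_\infty)>0$, contradicting $w(A_\infty)\le 0$, or $A_\infty$ lies in the automorphism directions, in which case the escape of $\sigma_i$ is tangent to $Aut_0(M,L)$ and is absorbed by the infimum over $\tau$, so that $\inf_\tau\bJ(\sigma_i\tau)$ would stay bounded --- again a contradiction. Hence $\bF$ is proper modulo $\bJ$. Boundedness from below follows from the same slope analysis (the asymptotic slopes are nonnegative on every ray), so $\bF$ is bounded below and proper; this gives CM-stability with respect to $L^\ell$ for every very ample $L^\ell$, and hence CM-stability of $(M,L)$.

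I anticipate the main obstacle to be the passage to the limit in the second paragraph, and it is twofold. First, the slope formula \eqref{eq:slope} must be upgraded from a statement along a \emph{fixed} ray to one holding uniformly as the direction $\hat A_i$ varies, so that a bounded $\bF(\sigma_i)$ genuinely forces nonpositive slope in the limit direction; controlling the lower-order and possible logarithmic terms in the $K$-energy expansion uniformly over the sphere of directions is the technical heart, and it is exactly here that the non-uniformity of K-stability (mere positivity, not a positive modulus) makes a direct estimate unavailable and a compactness argument necessary. Second, the limiting object $(\cX,\cL)$ is produced inside a Chow or Hilbert scheme compactification and may acquire a non-reduced or non-normal central fiber; to invoke K-stability I must realize it as a bona fide test configuration, approximate $A_\infty$ by rational (integral) generators, and if needed pass to the normalization, using that this operation does not increase the Donaldson--Futaki invariant. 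Matching the equality case --- the product configurations --- precisely with the $Aut_0(M,L)$-directions removed by the infimum is where the argument is most delicate.
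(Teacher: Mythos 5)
Your overall scaffolding (contradiction, unitary invariance via Cartan decomposition, reduction of the escape to $\infty$ to one-parameter data, and K-stability entering through the sign of the generalized Futaki/CM weight) matches the paper's, but the step that actually carries the proof --- your second paragraph --- has a genuine gap, and it is exactly the gap the paper is designed to avoid. Two problems. First, the convexity you invoke is unjustified: the paths $t\mapsto\psi_{\exp(tA)}$ are Bergman (symmetric-space) geodesics, not Mabuchi geodesics, and the K-energy is not known (and is not expected) to be convex along them; Mabuchi's geodesic convexity does not descend to a ``finite-dimensional shadow'' on $\bG$. Second, even granting a slope formula along each fixed algebraic one-parameter subgroup (which is the paper's Lemma \ref{lemm:k-1}/Lemma \ref{lemm:k-2}), your compactness argument takes a limit of directions $\hat A_i\to A_\infty$: the limit $A_\infty$ need not be rational, so it generates no test configuration at all (its closure is a subtorus, not a $\CC^*$); moreover the CM weight is not continuous in the direction (the central fiber jumps) and the $O(1)$ terms in the expansion are not uniform over directions, so neither passing to the limit nor approximating $A_\infty$ by rational generators yields an algebraic one-parameter subgroup with nonpositive weight. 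You flag this yourself as ``the technical heart,'' but the mechanism you propose to resolve it is precisely the one that breaks down; the same issue invalidates the claim that nonnegative slopes on every ray give a lower bound for $\bF$ on all of $\bG$.

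What the paper does instead at this point is algebraic, not analytic. Using S.~Paul's theory, both functionals are identified up to bounded error with norm ratios of a pair of vectors in finite-dimensional representations: $\bF$ with $p_{R(M),\Delta(M)}$ (Chow form and hyperdiscriminant) and $\bJ$ with $p_{R(M),I^r}$ (Lemma \ref{lemm:k-4} and the estimate \eqref{eq:cm-8}). Failure of properness then becomes a statement about orbit closures of the point $x=[R(M),\Delta(M)]\times[R(M),I^r]$ (Lemma \ref{lemm:k-5}). After the Cartan reduction to a maximal torus $\bT$ (Lemma \ref{lemm:k-3}, which is the part of your argument that does survive), one passes to a closed $\bT$-orbit inside an affine chart and applies Richardson's theorem to produce a genuine algebraic one-parameter subgroup $\bG_0$ whose orbit closure reaches the bad locus; Theorem \ref{th:k-2} (K-stability implies properness along every algebraic $\bG_0$) then gives the contradiction. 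This orbit-closure argument is what replaces your limit over directions: it never needs continuity of the Futaki invariant in the direction, never meets irrational generators, and never requires uniform control of the $O(1)$ terms. Without Paul's pair formalism and Richardson's theorem (or an equivalent substitute), your proposal cannot be completed as written.
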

We refer the readers to \cite{tian97}, \cite{donaldson02} and Subsection 4.1 of \cite{tian09} for the definition of the K-stability.
If $M$ is a K-stable Fano manifold, one can deduce from Theorem \ref{th:main} the existence
of a K\"ahler-Einstein metric on $M$. This is exactly the second approach in \cite{tian97} to 
complete the proof of the YTD conjecture for Fano manifolds.

Using the asymptotics of the K-energy in Lemma \ref{lemm:k-1}, one can easily show the converse: The CM-stability implies the K-stability.

The rest of this paper is devoted to the proof of Theorem \ref{th:main}.

\vskip 0.1in

{\bf Acknowledgement}: I would like to thank Chi Li and Sean Paul for useful comments on 
improving presentation of this paper.

\vskip 0.1in

\section{Asymptotics of the K-energy}

In this section, we recall a result which relates the Futaki invariant to the asymptotic expansion of the K-energy.
Let $\bG_0$ be an algebraic subgroup $\bG_0=\{\sigma(t)\}_{t\in \CC ^*}\subset \bG$, then
there is a unique limiting cycle
\begin{equation}\label{eq:k-1}
M_0\,=\,\lim_{t\to 0} \sigma(t)(M)\subset \CC P^N.
\end{equation}
If $\bG_0$ acts on $M_0$ non-trivially, one can associate the generalized Futaki invariant $f_{M_0, L_0}(\bG_0)$ for $M_0$,
where $L_0^\ell  = \cO(1)|_{M_0}$.
This invariant was defined by Ding-Tian for normal or irreducible $M_0$ \cite{dingtian92} and by Donaldson for general $M_0$. It can be
also formulated as the CM-weight introduced in
\cite{tian97}. If $\bG_0$ acts on $M_0$ trivially, we simply set $f_{M_0, L_0}(\bG_0)=0$.

In his thesis \cite{chili} (also see \cite{paultian06}), C. Li observed
\begin{lemm}
\lab{lemm:k-1}
For any algebraic subgroup $\bG_0=\{\sigma(t)\}_{t\in \CC^*} $ of $\bG$, we have
\begin{equation}
\lab{eq:k-2} \bF(\sigma(t))\,=\, - \,(f_{M_0, L_0}(\bG_0) \, -\, a(\bG_0)) \log |t|^2 \,+\,O(1)~~{\rm as}~~t\to 0,
\end{equation}
where $a(\bG_0)\in \QQ$ is non-negative and the equality holds if $M_0$
has no non-reduced components.
\end{lemm}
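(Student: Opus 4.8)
The plan is to evaluate $\bF(\sigma(t))=\bM_{\omega_0}(\psi_{\sigma(t)})$, where $\omega_{\psi_{\sigma(t)}}=\tfrac1\ell\,\sigma(t)^*\omega_{FS}$, by splitting the K-energy into an entropy part and a pluripotential (energy) part and reading off the coefficient of $\log|t|^2$ from each. I would use the standard decomposition
\begin{equation}
\bM_{\omega_0}(\varphi)\,=\,\frac1V\int_M\log\frac{\omega_\varphi^n}{\omega_0^n}\,\omega_\varphi^n\,+\,\bE_{\omega_0}(\varphi),
\end{equation}
in which the first term is the relative entropy of $\omega_\varphi^n$ with respect to $\omega_0^n$, and $\bE_{\omega_0}$ is the fixed combination of the functionals $I_{\omega_0}$, $J_{\omega_0}$ and a linear term built from the Ricci potential $h_{\omega_0}$ of $\omega_0$ (so that $\bE_{\omega_0}$ carries no logarithm of the volume density). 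The two pieces are governed by different mechanisms, and the correction $a(\bG_0)$ will come entirely from the entropy.

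For the energy part I would assemble the orbit into the total space $\cX\subset\CC P^N\times\Delta$, $\Delta=\{|t|<1\}$, of the degeneration, whose central fibre is the cycle $M_0=\sum_i m_i\,Y_i$ with $L_0^\ell=\cO(1)|_{M_0}$. Every form entering $\bE_{\omega_0}(\psi_{\sigma(t)})$ is a Fubini--Study pullback, so each underlying integral is, modulo $O(1)$, a curvature integral on $\CC P^N$ restricted to $\sigma(t)(M)$; these are computed from the curvature of the Deligne pairing (equivalently, the CM line bundle of \cite{tian97}) over the punctured disk, which gives $\bE_{\omega_0}(\psi_{\sigma(t)})=-\,c\,\log|t|^2+O(1)$ with $c$ an equivariant intersection number on $\cX$. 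Because $I_{\omega_0},J_{\omega_0}$ and the Ricci-potential term are cohomological pairings that register the limiting current $\sum_i m_iY_i$ with its multiplicities, this $c$ reproduces exactly the part of $f_{M_0,L_0}(\bG_0)$ not coming from the relative canonical class.

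The remaining, relative-canonical, part of the Futaki invariant is supplied by the entropy, and this is where reducedness enters. Unlike the cohomological pairings above, the entropy is sensitive to the density and not merely to the limiting current: along a component $Y_i$ of multiplicity $m_i$ the density of $\tfrac1{\ell^n}\sigma(t)^*\omega_{FS}^n$ degenerates so that its logarithm contributes through the \emph{reduced} divisor $\sum_iY_i$ rather than the scheme-theoretic fibre $\sum_i m_iY_i$. Consequently the entropy computes the canonical contribution of the reduced central fibre, which falls short of the scheme-theoretic canonical contribution defining $f_{M_0,L_0}(\bG_0)$ by exactly
\begin{equation}
a(\bG_0)\,=\,\frac1V\sum_i(m_i-1)\,\bigl(L_0^{\,n}\cdot Y_i\bigr).
\end{equation}
This is a non-negative rational number, being a sum of the degrees $L_0^{\,n}\cdot Y_i>0$ weighted by the integers $m_i-1\ge0$, and it vanishes precisely when every $m_i=1$, i.e. when $M_0$ has no non-reduced component. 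Adding the two contributions yields $\bF(\sigma(t))=-(f_{M_0,L_0}(\bG_0)-a(\bG_0))\log|t|^2+O(1)$, which is \eqref{eq:k-2}.

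The main obstacle is precisely the entropy analysis: one must show that $\tfrac1V\int_M\log(\omega_{\psi_{\sigma(t)}}^n/\omega_0^n)\,\omega_{\psi_{\sigma(t)}}^n$ has a clean leading term $-c'\log|t|^2+O(1)$ and identify $c'$ with the reduced-canonical weight, even though the Bergman metrics $\tfrac1\ell\sigma(t)^*\omega_{FS}$ degenerate and their volume forms are not uniformly comparable to $\omega_0^n$. I would handle this by passing to an equivariant log resolution of $\cX$ on which the central fibre becomes a normal-crossings divisor, performing the fibrewise local density computation near the non-reduced part $\sum_i(m_i-1)Y_i$ of the central fibre, and matching the outcome against the Knudsen--Mumford expansion of $\det H^0(M_0,L_0^k)$ used to define $f_{M_0,L_0}(\bG_0)$; the non-negativity and rationality of $a(\bG_0)$ then follow from the effectivity of the multiplicity divisor together with the nefness of $L_0$ on the resolution.
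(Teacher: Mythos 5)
Your proposal is correct in substance, but it follows a genuinely different route from the paper's proof. You decompose the K-energy fibrewise into entropy plus energy functionals (the Chen--Tian formula), compute the slope of the energy part as equivariant intersection numbers via Deligne pairings, and locate the entire correction $a(\bG_0)$ in the entropy, extracting it from a local density computation near the multiple components of the central fibre of an equivariant log resolution; your explicit formula $a(\bG_0)=\frac1V\sum_i(m_i-1)\,(L_0^n\cdot Y_i)$ is indeed the known value of this correction (it is the discrepancy $\frac1V(\cX_0-\cX_{0,{\rm red}})\cdot\cL^n$ appearing in the works of Phong--Ross--Sturm and Boucksom--Hisamoto--Jonsson, and consistent with \cite{paultian04}, \cite{paul08}). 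The paper never splits the K-energy: it regards $F(z)=\bM_{\omega_0}(\varphi_z)$ as a function on the base $\CC P^1$, derives the distributional equation \eqref{eq:k-3} for $\partial\bar\partial F$, rewrites its right-hand side as the curvature of a H\"older continuous metric on the CM line bundle $\bL$ over $\CC P^1$ at the cost of two auxiliary terms, $\xi$ (bounded) and $\zeta$ of \eqref{eq:k-6}, and then obtains \eqref{eq:k-2} from the resulting identity \eqref{eq:k-8}: the full Futaki invariant enters all at once as the order of the zero or pole at $0$ of the equivariant meromorphic section $S$ of $\bL$ (using the identification of $\deg\bL$ with $f_{M_0,L_0}(\bG_0)$ from \cite{lixu11}), while $a(\bG_0)$ is exactly the logarithmic divergence \eqref{eq:k-7} of $\zeta$, quoted from \cite{paultian04} and \cite{paul08}. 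What your route buys is an explicit, conceptually transparent identification of $a(\bG_0)$ and of its source in the entropy (the paper only asserts $a\ge 0$, $a\in\QQ$, and $a=0$ for reduced $M_0$). What the paper's route buys is that one never has to track how the separate slopes reassemble into the Donaldson--Futaki invariant---that bookkeeping is absorbed by the Grothendieck--Riemann--Roch computation of $c_1(\bL)$---so in particular the choice of extension of $\pi_1^*K_M^{-1}$ across the central fibre, which your sketch glosses over when matching the Ricci-energy slope with ``the part of $f$ not coming from the relative canonical class,'' never has to be confronted as a separate issue.

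Two small inaccuracies, neither fatal. First, on the resolution the degrees $L_0^n\cdot Y_i$ are only nonnegative, not strictly positive: exceptional components may be contracted by $p$ and can carry arbitrary multiplicities, so $a(\bG_0)$ can vanish even when some $m_i>1$; your ``vanishes precisely when every $m_i=1$'' should be weakened to the one implication the lemma actually requires, namely that reducedness of $M_0$ forces $a(\bG_0)=0$. Second, for the same reason, nonnegativity and rationality should be argued as in your final paragraph---effectivity of the multiplicity divisor together with nefness of $p^*\cL$ on the resolution---rather than from strict positivity of the degrees.
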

\begin{proof}
C. Li has pointed out that \eqref{eq:k-2} can be actually derived from \cite{tian97}. For the readers' convenience, we give a proof here by
using arguments from \cite{tian97}.

Define $\hat\cX$ as the set of all $(x,t)$ in $\CC P^N \times \CC$ satisfying: $x\in \sigma(t) (M)$ when $t\not=0$ and $x\in M_0$ when $t=0$.  It admits
a compactification $\cX$ as follows: There is an natural biholomorphism $\phi$ from $\cX_0=\hat\cX \backslash M_0$ onto $M\times \CC^* $ by $\phi(x,t)=(\sigma^{-1}(t)(x), t)$. Consider $\CC P^1$ as $\CC $ plus the point $\infty$, then we define
$$\cX\,=\,\hat\cX\cup _{\phi: \cX_0\simeq M\times \CC^* }(M\times \CC P^1\backslash \{0\}).$$
Clearly, $\cX$ admits a fibration over $\CC P^1$. Also, $L$ induces a relatively ample bundle $\cL$ over $\cX$: $\cL|_{\hat\cX} = \pi_1^*\cO_{\CC P^N}(1)|_{\hat\cX}$
and $\cL|_{M\times \CC P^1\backslash \{0\}} = \pi_1^*L^\ell$, where $\pi_i$ denotes the projection onto the $i$th factor.
Since $\sigma(s)\cdot\phi= \phi\cdot \sigma(s)$ for any $s\in \CC^*$,
we have a $G_0$-action on $\cX$:
$\sigma(s)(x,t)$ is equal to $(\sigma(s)(x), s\cdot t)$ on $\hat\cX$ and $ (x, s\cdot t)$ on $ M\times(\CC P^1\backslash \{0\})$.
Similarly, there is an natural lifting of $\bG_0$-action on $\cL$ which acts on $\cL|_{\hat\cX}$ as given and on 
$\cL|_{M\times \CC P^1\backslash \{0\}} = \pi_1^*L^\ell$ by $\sigma(s)(v,t)=(v,s\cdot t)$, where $v\in L$.

Let $p: \tilde \cX\mapsto \cX$ be a $\bG_0$-equivariant resolution and $\tilde\cL = p^*\cL$ (cf. \cite{kollar}). There is an induced fibration
$\pi:\tilde\cX\mapsto \CC P^1$, we denote $M_z=\pi^{-1} (z)$.
Choose a smooth Hermitian norm $h$ on $\tilde \cL$ over $\tilde\cX$ satisfying:

\vskip 0.1in
\noindent
(1) $h = p^*\pi_1^* h_1$ over $\pi^{-1}(\{ z\in \CC \,|\,|z| \le 1\})$, where $h_1$ denotes a fixed Hermitian metric on $\cO_{\CC P^N} (1)$
whose curvature is the Fubini-Study metric $\omega_{FS}$;

\vskip 0.1in
\noindent
(2) For any $z\in \CC^*$, the curvature form $R(h)$ of $h$ restricts to a
K\"ahler metric $\omega_z$ on $M_z$ satisfying:
$$\omega_z\,=\,\sigma(z)^*\omega_{FS}|_{M_z}~~{\rm for}~~|z|\le 1~~{\rm and}~~\omega_z\,=\,\ell \,\omega_0~~{\rm for}~~|z|\ge 2.$$
Here we regard $\tilde\cX\backslash p^{-1}(M_0)=\cX\backslash M_0$ as $M\times \CC P^1\backslash \{0\}$.
We may further assume that for $|z|\ge 2$, $h|_{M_z}$ is equal to a fixed norm $h_0$ on $L^\ell$ and for $|z|\le 1$,
$\sigma(z)^*h|_{M_z}= e^{-\ell\,\varphi_z}\, h_0$, where $\varphi_z$ is a corresponding K\"ahler potential, i.e.,
$$\frac{1}{\ell}\,\omega_z\, =\,\omega_0\,+\,\sqrt{-1}\,\partial\bar\partial \,\varphi_z.$$

Let $\cK=K_{\tilde\cX}\otimes \pi^*K_{\CC P^1}^{-1} $ be the relative canonical bundle of $\pi:\tilde\cX\mapsto \CC P^1$.
It has an induced Hermitian norm $k$ on $\cK^{-1}$ over $\CC P^1\backslash \{0\}$: For each $z\in \CC P^1\backslash \{0\}$,
$k |_{M_z} $ is given by the determinant of $\omega_z$. Clearly, the curvature form $R(k)$ of $k$ restricts to
${\rm Ric}(\omega_z)$ on each $M_z$.

Put $F(z) = \bM_{\omega_0}(\varphi_z)$, then $F$ is a continuous function on $\CC P^1\backslash \{0\}$, constant for $|z|\ge 2 $ and coincides with
$\bF$ for $|z|\le 1$.
%Clearly, $F(z)\,=\,\bF(\sigma(z))$ for $|z|\le 1$.
Following those direct computations exactly as we did for (8.5)
in \cite{tian97}, we can show that for any smooth function $\phi(z)$ with
support contained in $\CC P^1\backslash\{0\}$,
\begin{equation}\label{eq:k-3}
-\int_{\CC P^1} \,F \,\partial\bar\partial \phi\,=\,\frac{1}{V}\,\int_{\tilde\cX} \,\phi\,\left(R( k) \,-\, \frac{n\,\mu}{(n+1)\,\ell}\,R(h)\right)\wedge \left(\frac{1}{\ell}\,R(h)\right)^n,
\end{equation}
where $V\,=\,c_1(L)^n$. 

Let $\tilde\omega$ be a K\"ahler metric on $\tilde\cX$. We can construct another Hermitian metric $\tilde k $ on $\cK$ as we did for $k$. Then
the ratio $k/\tilde k $ is an non-negative function bounded from above. It follows from \eqref{eq:k-3}
\begin{equation}\label{eq:k-4}
-\int_{\CC P^1} \,(F -  \xi) \,\partial\bar\partial \phi\,=\,\frac{1}{V}\,\int_{\tilde\cX} \,\phi\,\left(R(\tilde k) \,-\, \frac{n\,\mu}{(n+1)\,\ell}\,R(h)\right)\wedge \left(\frac{1}{\ell}\,R(h)\right)^n,
\end{equation}
where
\begin{equation}\label{eq:k-5}
\xi (z)\,=\, \frac{1}{V}\,\int_{M_z}\, \left(\frac{k}{\tilde k} \right )\,\log \left (\frac{k}{\tilde k} \right )\,\tilde\omega^n.
\end{equation}
This is a bounded function, in fact, it is continuous in $z$.

Denote by $g_{\tilde\cX}$ and $g_B$ the Hermitian norms on $K_{\tilde\cX}^{-1}$ and $K_{\CC P^1}^{-1}$ induced by the metric $\tilde\omega$ on
$\tilde\cX$ and the Fubini-Study metric $\omega_{FS}$ on $\CC P^1$. Define
\begin{equation}
\label{eq:k-6}
\zeta (z)\,=\, \frac{1}{V}\,\int_{M_z}\,\log \left( \frac{\tilde k \cdot\pi^*g_B}{g_{\tilde\cX}}\right) \,\left(\frac{1}{\ell}\,R(h)\right)^n.
\end{equation}
It is easy to show (cf. \cite{tian97}\footnote{This function is denoted as $\psi_Z$ there.})
that $\zeta$ is bounded from above and extends across $0$ continuously if $M_0$ does not have components of multiplicity greater than $1$.
In fact, one can show (see \cite{paultian04} and \cite{paul08}) that
\begin{equation}\label{eq:k-7}
\zeta(t)\,=\,a(\bG_0)\,\log |t|^2 \,+\,O(1),~~~{\rm as}~t\to 0,
\end{equation}
where $a(\bG_0)\ge 0$ and $O(1)$ denotes a bounded quantity. It follows from \eqref{eq:k-4}
$$\int_{\CC P^1} (F -  \xi - \zeta)\, \partial\bar\partial \phi=\frac{1}{V}\,\int_{\tilde\cX} \phi\, \left(R(g_B)- R(g_{\tilde\cX}) + \frac{n\,\mu}{(n+1)\,\ell}\,
R(h)\right) \wedge \left(\frac{1}{\ell}\, R(h)\right)^n.$$
Let $\bL$ be the determinant line bundle $\det(\cE,\pi)$, where $\cE$ is defined by
$$2^{n+1}\,\ell^n\, V\,\cE\,=\, (\cK^{-1}\,-\,\cK)\otimes (\tilde \cL \,-\,\tilde \cL^{-1})^n\,-\, \frac{n \mu }{(n+1)\,\ell}\, (\tilde \cL \,-\,\tilde \cL^{-1})^{n+1}.$$
This line bundle $\bL$ was introduced in \cite{tian97} and called the CM-line bundle or polarization.\footnote{A different formulation of the CM-line bundle
was given in \cite{paultian06} which suits better for more general fibrations.}
By the Grothendick-Riemann-Roch Theorem, the first Chern class of $\bL$ is given by
$$c_1(\bL)\,=\,\ell ^{-n}\,\pi_* [c_1(\cK)\, c_1(\tilde \cL)^n\,+\, \frac{ n\mu}{(n+1)\,\ell}\,c_1(\tilde \cL)^{n+1}].$$
The corresponding degree is simply the CM-weight dated back to \cite{tian97}. 
It was proved in \cite{lixu11} that this CM-weight coincides with the generalized Futaki invariant $f_{M_0,L_0}(\bG_0)$.
Furthermore, there is a H\"older continuous norm $||\cdot||_B$ on $\bL$ over $\CC P^1$ whose curvature form is given by the push-forward
form
$$ \pi_*\left[\left(R(g_B) -  R(g_{\tilde\cX}) + \frac{n\,\mu}{(n+1)\,\ell}\,R(h)\right) \wedge \left (\frac{1}{\ell}\, R(h)\right )^n\right ].$$
Fix a unit $z_0\in \CC$ and $1\in \bL|_{z_0}$, then we set
$$S(z) \,=\, \sigma(t) (1)\,\in\,\bL|_z,~~~{\rm where}~z\,=\,\sigma(t)(z_0).$$
This defines a holomorphic section $S$ of $\bL$ over $\CC^*$ which extends to $\CC P^1\backslash\{0\}$, moreover, it is
non-zero at $\infty$. It follows from the above discussions
$$\partial \bar\partial (F -  \xi - \zeta + \log ||S||_B^2)\,=\,0~~{\rm on}~~\CC^*.$$
Since $F -  \xi - \zeta + \log ||S||_B$ is bounded near $\infty$, we conclude
\begin{equation}\label{eq:k-8}
F\,=\,\xi \,+\,\zeta\,-\,\log ||S||_B^2\,+\,c,
\end{equation}
where $c$ is a constant.

On the other hand, we can extend $S$ to be a meromorphic section of $\bL$ with an zero or pole of order $\pm\,f_{M_0,L_0}(\bG_0)$ at $0$.
Then \eqref{eq:k-2} follows from \eqref{eq:k-8}, \eqref{eq:k-7} and the facts that $\xi$ is bounded and $F(t)=\bF(\sigma(t))$ for $|t|\le 1$.

\end{proof}

We can use the arguments in the proof to identify $f_{M_0, L_0}(\bG_0) \, -\, a(\bG_0)$ with a generalized Futaki invariant
of some special degeneration of $M$. Let us first describe such a degeneration. We will adopt the notations in last proof. Note that
there is an natural fibration $\hat\pi: \hat \cX\mapsto \CC$.
%For any positive integer $m$, define
%$$\hat\cX_m\,=\,\{\,(y,t)\,|\,y\in \hat\cX,~~\hat\pi(y)= t^m\,\}.$$
%Clearly, $\hat\cX_1=\hat\cX$ and there is an natural fibration $\hat\pi:\hat\cX_m\mapsto \CC$.
It was shown in \cite{lixu11} (also see \cite{arezzolanavevedova})
that there is a $\bG_0$-equivariant semi-stable reduction $\pi':\cX'\mapsto \CC$ of $\hat\cX$ whose generic fiber is biholomorphic to $M$.\footnote{
In fact, we do not need $\cX'$ to be a semi-stable reduction in the subsequent discussions. It is sufficient if the central fiber of
$\cX'$ is free of multiple components. Then such a $\cX'$ can be taken to be the normalization of a base change of $\hat\cX$.}
This implies that
the central fiber $M_0'=\pi'^{-1}(0)$ is a singular variety with normal crossings.
Furthermore, there is an natural map $q:\cX'\mapsto\hat\cX$ of degree $m$ with $q(M_0')=M_0$.
Then we have a generalized Futaki invariant $f_{M_0',L_0'}(\bG_0)$ associated to the degeneration $\pi':\cX'\mapsto \CC$.

\begin{lemm}
\lab{lemm:k-2}
For any $\bG_0$ above, we have
\begin{equation}
\lab{eq:k-9} \bF(\sigma(t))\,=\, - \,\frac{1}{m}\,f_{M_0', L_0'}(\bG_0) \, \log |t|^2 \,+\,O(1)~~{\rm as}~~t\to 0.
\end{equation}
In particular, we have 
\begin{equation}\label{eq:k-9'}
f_{M_0', L_0'}(\bG_0)\,=\,m\,(f_{M_0, L_0}(\bG_0) \, -\, a(\bG_0)).
\end{equation}
\end{lemm}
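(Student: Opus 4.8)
The plan is to run the entire construction from the proof of Lemma \ref{lemm:k-1} a second time, now for the semi-stable family $\pi':\cX'\mapsto\CC$ in place of $\hat\cX$, and to exploit that its central fiber $M_0'$ has no multiple components. The guiding observation is that $\bF(\sigma(t))$ is intrinsic: it depends only on the polarized subvariety $\sigma(t)(M)\subset\CC P^N$, not on which degenerating family we use to study it. Writing the semi-stable reduction as a base change $t=s^m$ of degree $m$ (followed by normalization and modification), so that $q:\cX'\mapsto\hat\cX$ covers $s\mapsto s^m$, the fiber $\cX'_s$ is carried isomorphically by $q$ onto $\hat\cX_{s^m}=\sigma(s^m)(M)$, and since $\cL':=q^*\cL$ restricts to $\cO(1)$ on this fiber, the associated K-energy satisfies
$$F'(s)\,:=\,\bM_{\omega_0}(\varphi'_s)\,=\,\bF(\sigma(s^m)).$$

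First I would equip $\cX'$ (after a $\bG_0$-equivariant resolution) with the metrics $h,k,\tilde k,\ldots$ and the analogous choices exactly as in the proof of Lemma \ref{lemm:k-1}, form the corresponding functions $\xi',\zeta'$ and the CM line bundle $\bL'$ with its canonical section $S'$, and obtain the analogue of \eqref{eq:k-8},
$$F'(s)\,=\,\xi'(s)\,+\,\zeta'(s)\,-\,\log\|S'\|_{B'}^2\,+\,c'.$$
Here $\xi'$ is bounded as before. The place where the semi-stable model enters is the term $\zeta'$: by the remark following \eqref{eq:k-6}, $\zeta'$ extends continuously across $s=0$, hence is bounded, precisely because $M_0'$ has no component of multiplicity greater than one. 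In other words, the analogue of the nonnegative correction term $a(\bG_0)$ from \eqref{eq:k-7} vanishes for the family $\cX'$.

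Next I would identify the order of the section $S'$. By the same argument as in Lemma \ref{lemm:k-1}, namely computing $c_1(\bL')$ via Grothendieck--Riemann--Roch and invoking the identification of the CM-weight with the generalized Futaki invariant, $S'$ extends to a meromorphic section of $\bL'$ with a zero or pole at $s=0$ of order $\pm f_{M_0',L_0'}(\bG_0)$, computed in the natural parameter $s$ of $\pi':\cX'\mapsto\CC$. Combining the three ingredients yields
$$\bF(\sigma(s^m))\,=\,F'(s)\,=\,-\,f_{M_0',L_0'}(\bG_0)\,\log|s|^2\,+\,O(1)\quad\text{as }s\to 0.$$
Substituting $s=t^{1/m}$, i.e. $\log|s|^2=\tfrac{1}{m}\log|t|^2$, gives \eqref{eq:k-9}; comparing with \eqref{eq:k-2} of Lemma \ref{lemm:k-1} then forces \eqref{eq:k-9'}.

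The main obstacle, I expect, is bookkeeping the normalizations under the base change rather than any new idea. Concretely, one must verify that the order of vanishing of $S'$ really is the Futaki invariant $f_{M_0',L_0'}(\bG_0)$ with the correct sign and measured in the $s$-parameter: since the $\bG_0$-action on $\cX'$ covers $\sigma(\mu^m)$ on $\hat\cX$, the weights on $M_0'$ are naturally read off in $s$, and one must confirm that the single factor $m$ relating $\log|t|^2$ to $\log|s|^2$ accounts entirely for the discrepancy with Lemma \ref{lemm:k-1}. The other delicate input is the boundedness of $\zeta'$, which rests on the reduced, normal-crossings structure of $M_0'$ through the cited estimate; granting these, \eqref{eq:k-9'} is immediate.
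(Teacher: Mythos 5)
Your proposal is correct and takes essentially the same route as the paper: compactify the $\bG_0$-equivariant semi-stable reduction to $\cX'_s\to\CC P^1$ with its degree-$m$ map $q$ to $\cX$, pull back $h$, use boundedness of $\xi$ and of $\zeta$ (the latter exactly because $M_0'$ has no multiple components), and identify the order of the meromorphic section of the CM line bundle with $f_{M_0',L_0'}(\bG_0)$ via Grothendieck--Riemann--Roch and the CM-weight/Futaki identification. The only cosmetic difference is bookkeeping: you work throughout in the base-change parameter $s$ and substitute $\log|s|^2=\tfrac1m\log|t|^2$ at the end, while the paper keeps $F$ as a function of $t$ and carries the degree-$m$ factor through the fiber-integration identities (its $m\,F=\xi+\zeta-\log||S||_B^2+c$); these are equivalent formulations of the same argument.
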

\begin{proof}
We will use the arguments in the proof of last lemma to prove \eqref{eq:k-9} .

Let $\cX$ be the compactification of $\hat\cX$ and $\cL$ be the line bundle over $\cX$
constructed in the proof of Lemma \ref{lemm:k-1}. Then it admits a $\bG_0$-equivariant semi-stable reduction $\pi': \cX'_s\mapsto \CC P^1$
such that it is a compactification of $\cX'$ with smooth fiber over $\infty\in \CC P^1$ and admits a holomorphic map $q: \cX'_s\mapsto \cX$ of degree $m$.
To prove \eqref{eq:k-9} which is equivalent to \eqref{eq:k-9'}, we simply argue as we did in the proof of Lemma \ref{lemm:k-1} with $p:\tilde\cX\mapsto \cX$ 
replaced by $q:\cX'_s\mapsto \cX$.

The norm $h$ in the proof of last lemma induces a Hermitian norm, still denoted by $h$, on $\cL'=q^*\cL$ over $\cX'_s$.
As before, by identifying $\cX'_s\backslash M_0'$ with $M\times\CC P^1\backslash\{0\}$, the curvature $R(h)$ restricts to a
K\"ahler metric $\omega_z$ on $M_z'$, which is simply $\pi'^{-1} (z)$, for any $z\in \CC^*$ satisfying:
$$\omega_z\,=\,\sigma(z)^*\omega_{FS}|_{M_z'}~{\rm for}~|z|\le 1~~{\rm and}~~\omega_z\,=\,\ell \,\omega_0~{\rm for}~|z|\ge 2.$$
Further, we have $h|_{M_z'}=h_0$ on $L^\ell$ for $|z|\ge 2$ and $\sigma(z)^*h|_{M_z'}= e^{-\ell\,\varphi_z}\, h_0$ for $|z|\le 1$.

Let $\cK=K_{\cX'_s}\otimes \pi'^*K_{\CC P^1}^{-1} $ be the relative canonical bundle of $\pi':\cX'_s\mapsto \CC P^1$.
It has an induced Hermitian norm $k$ on $\cK^{-1}$ over $\CC P^1\backslash \{0\}$: For each $z\in \CC P^1\backslash \{0\}$,
$k |_{M_z'} $ is given by the determinant of $\omega_z$. Clearly, the curvature form $R(k)$ of $k$ restricts to
${\rm Ric}(\omega_z)$ on each $M_z'$.

It follows from \eqref{eq:k-3}
\begin{equation}\label{eq:k-3'}
-m \,\int_{\CC P^1} \,F \,\partial\bar\partial \phi\,=\,\frac{1}{V}\,\int_{\cX_s'} \,\phi\,\left(R( k) \,-\, \frac{n\,\mu}{(n+1)\,\ell}\,R(h)\right)\wedge \left(\frac{1}{\ell}\,R(h)\right)^n.
\end{equation}

Let $\omega'$ be a K\"ahler metric on $\cX_s'$. We can construct another Hermitian metric $k'$ on $\cK$ as we did for $k$. Then
the ratio $k/k '$ is an non-negative function bounded from above. It follows from \eqref{eq:k-3}
\begin{equation}\label{eq:k-10}
-\int_{\CC P^1} \,(m\,F -  \xi) \,\partial\bar\partial \phi\,=\,\frac{1}{V}\,\int_{\cX_s'} \,\phi\,\left(R(k') \,-\, \frac{n\,\mu}{(n+1)\,\ell}\,R(h)\right)\wedge \left(\frac{1}{\ell}\,R(h)\right)^n,
\end{equation}
where
\begin{equation}\label{eq:k-11}
\xi (z)\,=\, \frac{1}{V}\,\int_{M_z'}\, \left(\frac{k}{ k'} \right )\,\log \left (\frac{k}{ k'} \right )\,\omega'^n.
\end{equation}
This is again a bounded function.

Denote by $g'$ and $g_B$ the Hermitian norms on $K_{\cX'_s}^{-1}$ and $K_{\CC P^1}^{-1}$ induced by the metric $\omega'$ on
$\cX_s'$ and the Fubini-Study metric $\omega_{FS}$ on $\CC P^1$. Define
\begin{equation}
\label{eq:k-12}
\zeta (z)\,=\, \frac{1}{V}\,\int_{M_z'}\,\log \left( \frac{ k' \cdot\pi^*g_B}{g'}\right) \,\left(\frac{1}{\ell}\,R(h)\right)^n.
\end{equation}
Since $M_0'$ has no multiple components, $\zeta$ is bounded. It follows from \eqref{eq:k-10}
$$\int_{\CC P^1} (m\,F -  \xi - \zeta) \partial\bar\partial \phi=\frac{1}{V}\,\int_{\cX_s'} \phi \left(R(g_B)- R(g_{\cX_s'}) + \frac{n \mu}{(n+1) \ell}\,
R(h)\right) \wedge \left(\frac{1}{\ell} R(h)\right)^n.$$
Let $\bL$ be the determinant line bundle $\det(\cE',\pi)$, where $\cE'$ is defined by
$$2^{n+1}\,\ell^n\, V\,\cE'\,=\, (\cK^{-1}\,-\,\cK)\otimes (\cL' \,-\, \cL'^{-1})^n\,-\, \frac{n \mu }{(n+1)\,\ell}\, ( \cL '\,-\,\cL'^{-1})^{n+1}.$$

By the Grothendick-Riemann-Roch Theorem, the first Chern class of $\bL$ is given by
$$c_1(\bL)\,=\,\ell ^{-n}\,\pi'_* [c_1(\cK)\, c_1(\cL')^n\,+\, \frac{n \mu}{(n+1)\,\ell}\,c_1(\cL')^{n+1}].$$
The corresponding degree is the CM-weight which is equal to the invariant $f_{M_0',L_0'}(\bG_0)$.
Furthermore, there is a H\"older continuous norm $||\cdot||_B$ on $\bL$ over $\CC P^1$ whose curvature form is given by the push-forward
form
$$ \pi'_*\left[\left( R(g_B) - R(g_{\cX_s'}) + \frac{n\,\mu}{(n+1)\,\ell}\,R(h)\right) \wedge \left (\frac{1}{\ell}\, R(h)\right )^n\right ].$$
Fix a unit $z_0\in \CC$ and $1\in \bL|_{z_0}$, then we set
$$S(z) \,=\, \sigma(t) (1)\,\in\,\bL|_z,~~~{\rm where}~z\,=\,\sigma(t)(z_0).$$
This defines a holomorphic section $S$ of $\bL$ over $\CC^*$ which extends to $\CC P^1\backslash\{0\}$, moreover, it is
non-zero at $\infty$. It follows from the above discussions
$$\partial \bar\partial (m\,F -  \xi - \zeta + \log ||S||_B^2)\,=\,0~~{\rm on}~~\CC^*.$$
Since $m\,F -  \xi - \zeta + \log ||S||_B$ is bounded near $\infty$, we conclude
\begin{equation}\label{eq:k-12'}
m\,F\,=\,\xi \,+\,\zeta\,-\,\log ||S||_B^2\,+\,c,
\end{equation}
where $c$ is a constant.

On the other hand, we can extend $S$ to be a meromorphic section of $\bL$ with an zero or pole of order $\pm\,f_{M_0',L_0'}(\bG_0)$ at $0$. Then \eqref{eq:k-9} follows from \eqref{eq:k-12'} and the facts that both $\xi$ and $\zeta$ are bounded.

\end{proof}

\begin{rema}
One can also prove \eqref{eq:k-9} by using the equivariant Riemann-Roch Theorem. 
\footnote{I learned from Chenyang Xu that \eqref{eq:k-9'}, and consequently, \eqref{eq:k-9}, 
can be also proved by a purely algebraic method.}

%\footnote{If $M_0$ is smooth, it is exactly the same as
%Donaldson's identifying his
%reformulation of the Futaki invariant with the original one (see\cite{donaldson02}).}
\end{rema}

It follows from Lemma \ref{lemm:k-2}

\begin{theo}\label{th:k-2}
If $(M, L)$ is K-stable, then $\bF$ is proper along any one-parameter algebraic subgroup $\bG_0$ of $\bG$ unless $\bG_0$ preserves $M$, i.e.,
it is contained in the automorphism group of $M$.
\end{theo}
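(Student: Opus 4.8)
The plan is to combine the asymptotic formula of Lemma~\ref{lemm:k-2} with the definition of K-stability. Recall that K-stability of $(M,L)$ means that for every one-parameter algebraic subgroup $\bG_0=\{\sigma(t)\}_{t\in\CC^*}$ of $\bG$ that produces a nontrivial degeneration, the generalized Futaki invariant of the associated special degeneration is strictly positive, while it vanishes precisely when $\bG_0\subset\Aut(M)$ (so that the degeneration is a product and $M_0'\cong M$). Fix such a $\bG_0$. By Lemma~\ref{lemm:k-2} we have the expansion
\begin{equation*}
\bF(\sigma(t))\,=\,-\,\frac{1}{m}\,f_{M_0',L_0'}(\bG_0)\,\log|t|^2\,+\,O(1)\quad\text{as }t\to 0.
\end{equation*}
If $\bG_0$ does not preserve $M$, then the semistable reduction $\pi':\cX'_s\mapsto\CC P^1$ is a genuine (non-product) special degeneration with central fiber $M_0'$ free of multiple components, so K-stability gives $f_{M_0',L_0'}(\bG_0)>0$. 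Since $m>0$, the coefficient $-\tfrac{1}{m}f_{M_0',L_0'}(\bG_0)$ is strictly negative, and hence $\bF(\sigma(t))\to+\infty$ as $t\to 0$. This is exactly the statement that $\bF$ is proper along $\bG_0$.

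The main point requiring care is to match the notion of properness in the theorem with the divergence just obtained, and to handle the role of $\bJ$. First I would verify that along the orbit $\{\sigma(t)\}$ the quantity $\bJ(\sigma(t))$ also grows like $\log|t|^{-2}$ (with a nonnegative coefficient) as $t\to 0$ whenever the orbit is nontrivial; this is the standard asymptotic for the $\bJ$-functional along a one-parameter subgroup and guarantees that $\bJ(\sigma(t))\to\infty$ precisely along the degenerating direction. Combined with the linear-in-$\log|t|^2$ growth of $\bF$ established above, this yields a bound of the form $\bF(\sigma(t))\ge\epsilon\,\bJ(\sigma(t))-C$ along the orbit, which is the desired properness of $\bF$ modulo $\bJ$ restricted to $\bG_0$.

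The genuinely delicate step is the comparison between the generalized Futaki invariant $f_{M_0',L_0'}(\bG_0)$ of the semistable reduction and the intrinsic invariant used in the definition of K-stability. One must check that the central fiber $M_0'$, being a normal-crossings degeneration with no multiple components obtained by semistable reduction from the test configuration generated by $\bG_0$, carries the correct sign of the Futaki invariant and that the normalization by $m$ does not alter strict positivity. This is where the hypotheses of Lemma~\ref{lemm:k-2}—that $M_0'$ has no multiple components, so that $\zeta$ is bounded and no correction term $a(\bG_0)$ intervenes—are essential: they ensure the coefficient in the expansion is \emph{exactly} $-\tfrac{1}{m}f_{M_0',L_0'}(\bG_0)$ rather than a shifted quantity that could fail to be negative. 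Once this identification is secured, the dichotomy is clean: either $\bG_0\subset\Aut(M)$, or $f_{M_0',L_0'}(\bG_0)>0$ and $\bF$ is proper along $\bG_0$.

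Finally I would note that the remaining case, $\bG_0\subset\Aut(M)$, is consistent with the statement: there the degeneration is trivial, $f_{M_0',L_0'}(\bG_0)=0$, and $\bF(\sigma(t))$ stays bounded, so properness along $\bG_0$ genuinely fails—precisely the excluded case. The hardest part of the argument is thus not the asymptotic expansion itself, which is supplied by Lemma~\ref{lemm:k-2}, but rather ensuring that the positivity input of K-stability applies to the specific degeneration $M_0'$ arising here and is transported correctly through the degree-$m$ base change.
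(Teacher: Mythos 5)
Your proposal is correct and takes essentially the same route as the paper: the paper deduces the theorem directly from Lemma \ref{lemm:k-2}, i.e., from the expansion $\bF(\sigma(t)) = -\tfrac{1}{m}\,f_{M_0',L_0'}(\bG_0)\log|t|^2 + O(1)$ together with the strict positivity of the generalized Futaki invariant that K-stability provides for the (reduced-central-fiber) degeneration when $\bG_0$ does not preserve $M$. The points you elaborate — the comparison with the growth of $\bJ$ along the orbit and the harmlessness of the degree-$m$ base change — are exactly the details the paper leaves implicit, and your treatment of them is sound.
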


\section{Proving Theorem \ref{th:main} when ${\rm Aut_0}(M,L)=\{1\}$}

In view of Theorem \ref{th:k-2}, the K-stability implies that $\bF$ is proper along any one-parameter algebraic subgroup of $\bG$. Hence,
our problem is whether or not the properness of $\bF$ on $\bG$ follows from the properness of $\bF$ along any one-parameter algebraic subgroup of $\bG$.
This is an algebraic problem in nature.

As in classical Geometric Invariant Theory, we deduce the CM-stability from the K-stability in two steps. For simplicity, we assume
${\rm Aut_0}(M,L)=\{1\}$ in this section and explain how to adapt the proof to the general case in the next section.

\begin{lemm}\label{lemm:k-3}
Let $\bT$ be any maximal algebraic torus of $\bG$. If
the restriction $\bF|_{\bT}$ is proper in the sense of \eqref{eq:cm-defi}, then $M$ is CM-stable with respect to $L^\ell$.
\end{lemm}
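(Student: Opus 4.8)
\emph{Setup: invariance and the Cartan decomposition.} The plan is to follow the classical reduction-to-a-maximal-torus of Geometric Invariant Theory, adapted to the functional $\bF$. The starting observation is that both $\bF$ and $\bJ$ are invariant under the \emph{left} action of the maximal compact subgroup $K=\mathbf{SU}(N+1)\subset\bG$: since $\omega_{FS}$ is $U(N+1)$-invariant, for $k\in K$ one has $\iota^*(k\sigma)^*\omega_{FS}=\iota^*\sigma^*(k^*\omega_{FS})=\iota^*\sigma^*\omega_{FS}$ (here $\iota$ is the fixed embedding), so $\bF(k\sigma)=\bF(\sigma)$ and $\bJ(k\sigma)=\bJ(\sigma)$. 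Hence $\bF,\bJ$ descend to the symmetric space $\mathcal{P}=K\backslash\bG$ of positive Hermitian forms on $\CC^{N+1}$, in which the chosen torus $\bT$ spans a totally geodesic flat $F_{\bT}$. Writing the Cartan ($KAK$) decomposition $\sigma=k_1\exp(H)k_2$ with $k_1,k_2\in K$ and $H$ in the closed positive Weyl chamber $\overline{\mathfrak{a}_+}$ of $\mathrm{Lie}(\bT)$, so that $\exp(H)\in\bT$, left-invariance reduces the study of $\bF,\bJ$ to the locus $\exp(\overline{\mathfrak{a}_+})\cdot K$ — torus elements modified by a residual right $K$-factor.

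\emph{Reduction of non-properness to a single geodesic ray.} I would argue by contradiction. If $M$ were not CM-stable there is a sequence $\sigma_i=k_{1,i}\exp(H_i)k_{2,i}$ with $\bJ(\sigma_i)\to\infty$ and $\bF(\sigma_i)\le C$. Since $K\exp(\{|H|\le R\})K$ is compact and $\bJ$ is continuous, $\bJ(\sigma_i)\to\infty$ forces $|H_i|\to\infty$; passing to a subsequence, normalize $\widehat H_i:=H_i/|H_i|\to H_\infty$ and $k_{2,i}\to k_\infty$. The essential analytic input is the geodesic convexity of Mabuchi's K-energy along the Bergman one-parameter subgroups, so that $t\mapsto\bF(\exp(t\widehat H_i)k_{2,i})$ is convex. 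Using $\bF(k_{2,i})=O(1)$ at $t=0$ and $\bF(\exp(H_i)k_{2,i})=\bF(\sigma_i)\le C$ at $t=|H_i|$ (left-invariance removes $k_{1,i}$), convexity gives $\bF\le C'$ along the whole segment; letting $i\to\infty$ with $t$ fixed and invoking continuity yields a single ray $t\mapsto\exp(tH_\infty)k_\infty$ along which $\bF$ is bounded above while $\bJ\to\infty$ (the direction is nontrivial and $\mathrm{Aut}_0(M,L)=\{1\}$). Thus non-properness on $\bG$ is reduced to non-properness along one one-parameter degeneration whose direction $H_\infty$ lies in $\mathrm{Lie}(\bT)$.

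\emph{The main obstacle.} The difficulty — which I expect to be the heart of the proof — is that the ray produced above is $\exp(tH_\infty)k_\infty$ rather than a ray inside $\bT$: the residual right factor $k_\infty$ replaces $M$ by a translate $k_\infty^{-1}M$ before degenerating in the torus direction $H_\infty$, and since $\bF$ is \emph{not} right-$K$-invariant, the corresponding limit cycle and its generalized Futaki invariant (the asymptotic slope supplied by Lemma \ref{lemm:k-1}) genuinely depend on $k_\infty$. So although every asymptotic direction is $\mathrm{Ad}(K)$-conjugate into $\overline{\mathfrak{a}_+}\subset\mathrm{Lie}(\bT)$ by the spectral theorem, the reduction to the single fixed torus $\bT$ is not formal, and a naive continuity-in-$k_\infty$ argument would overshoot and amount to reproving full K-stability.

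\emph{How I would close the gap.} To resolve this I would import the finite-dimensional GIT structure. By Lemma \ref{lemm:k-1} the asymptotic slope of $\bF$ along any one-parameter subgroup is a weight, and by the known relation of the K-energy to the Chow and hyperdiscriminant forms, $\bF$ agrees up to a bounded function with a difference of logarithms of norms of fixed vectors in $\bG$-representations; properness of such a difference is governed by a numerical criterion on their weight polytopes. It is exactly here that the Hilbert–Mumford/Kempf–Ness reduction applies, so that the polytope condition is detected on the single maximal torus $\bT$. The hard part is to make this passage rigorous in the present analytic setting — to control the convexity-produced ray, with its $K$-conjugated direction $H_\infty$ and special residual factor $k_\infty$, by the weight data of $M$ relative to $\bT$, while carrying the $\bJ$-normalization throughout so that ``properness modulo $\bJ$'' in the sense of \eqref{eq:cm-defi} is preserved. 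Once the ray is thus matched to a degeneration detected by $\bF|_{\bT}$, the hypothesis that $\bF|_{\bT}$ is proper contradicts $\bF(\exp(tH_\infty)k_\infty)\le C'$, completing the argument.
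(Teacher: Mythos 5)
Your opening reduction is the same as the paper's: left $U(N+1)$-invariance of $\bF$ and $\bJ$ plus the Cartan decomposition $\sigma_i=k_i'\,t_i\,k_i$ reduce the contradiction hypothesis to elements $t_ik_i$ with $t_i\in\bT$ and $k_i$ unitary. From there, however, your argument contains a step that fails and a gap that is never closed. The failing step is the claimed convexity of $t\mapsto\bF(\exp(t\widehat H_i)k_{2,i})$. A one-parameter subgroup of $\bG$ induces a path of Bergman metrics on $M$ (restrictions of pulled-back Fubini--Study metrics), and such paths are \emph{not} geodesics of the Mabuchi metric; the K-energy is convex along (weak) Mabuchi geodesics, but no convexity along Bergman one-parameter orbits is known, and none is used in the paper. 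Indeed, if $\bF$ were geodesically convex on the symmetric space $\bK\backslash\bG$, then properness along one-parameter subgroups (Theorem \ref{th:k-2}) would formally upgrade to properness on all of $\bG$, and the entire apparatus of Section 3 (the pairs $(R(M),\Delta(M))$, Richardson's lemma) would be superfluous; the absence of such convexity is precisely why the theorem is nontrivial. So the ``reduction to a single limiting ray'' cannot be run as stated.

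The second and decisive problem is that the obstacle you yourself single out --- the residual right factor $k_\infty$, against which $\bF$ is not invariant --- is never actually overcome: your final paragraph is a program (``the hard part is to make this passage rigorous''), not an argument, and it defers exactly the point where a proof is needed. The paper resolves this obstacle far more cheaply, with no limiting ray, no convexity, and no weight-polytope input. Since each $k_i$ is unitary, the Bergman potentials satisfy the uniform estimate \eqref{eq:k-13}, namely $|\psi_{t_i}-\psi_{t_ik_i}|\le\log(N+1)$, so the right factor distorts the potential by a bounded amount independent of $i$. The real content is upgrading this $C^0$-bound to $|\bF(t_i)-\bF(t_ik_i)|\le C$, which is not formal because the K-energy is not Lipschitz in the $C^0$-norm of the potential: one writes the difference of the entropy terms as
$$\int_M \log\left(\frac{\omega_{t_i}^n}{\omega_{t_ik_i}^n}\right)\omega_{t_i}^n
\;+\;\int_M \log\left(\frac{\omega_{t_ik_i}^n}{\omega_0^n}\right)\left(\omega_{t_i}^n-\omega_{t_ik_i}^n\right),$$
notes that the first term is bounded below, and integrates the second by parts into \eqref{eq:k-14}, which is controlled by the potential bound together with the uniform \emph{upper} bound on the Ricci curvature of the Bergman metrics $\omega_{t_i}$, $\omega_{t_ik_i}$; the same potential estimate bounds $|\bJ(t_i)-\bJ(t_ik_i)|$. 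Hence the bad sequence $t_ik_i$ may simply be replaced by $t_i\in\bT$ at bounded cost: $\bF(t_i)$ stays bounded while $\bJ(t_i)\to\infty$, contradicting the hypothesis that $\bF|_{\bT}$ is proper. Your proposal should be repaired by replacing the convexity/limit-ray construction and the GIT matching with this direct comparison estimate.
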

\begin{proof}
We prove it by contradiction. Suppose that we have a sequence $\sigma_i\in\bG$ such that
$\bF(\sigma_i)$ stay bounded while $\bJ(\sigma_i)$ diverge to $\infty$.

Recall the Cartan decomposition: $\bG= \bK \cdot\bT\cdot\bK$, where $\bK = U(N+1)$.
Write $\sigma_i = k_i' t_i k_i$ for $k_i,k_i'\in \bK$ and $t_i\in \bT$. Then
we have that $\bF(t_i k_i)= \bF(\sigma_i)$ stay bounded while $\bJ(t_ik_i)=\bJ(\sigma_i)$ diverge to
$\infty$.

On the other hand, since each $k_i$ is represented by unitary matrix, we can show easily
\begin{equation}\label{eq:k-13}
|\psi_{t_i} - \psi_{t_i k_i}|\,\le\,\log (N+1).
\end{equation}
Write $\bD(t_i)=\bF(\psi_{t_i}) \,-\, \bF(\psi_{t_ik_i})$. Using \eqref{eq:k-13} and the definition of the K-energy, we can deduce
$$
\bD(t_i)\,=\,\frac{1}{\ell^n\,V}\,\left(\int_M \,\log \left( \frac{\omega_{t_i}^n}{\omega_0^n}\right )\,\omega_{t_i}^n -
\int_M \,\log \left( \frac{\omega_{t_ik_i}^n}{\omega_0^n}\right )\,\omega_{t_ik_i}^n\right) +\cO(1).
$$
The integrals on the right side are equal to
$$\int_M \,\log \left( \frac{\omega_{t_i}^n}{\omega_{t_ik_i}^n}\right )\,\omega_{t_i}^n +
\int_M \,\log \left( \frac{\omega_{t_ik_i}^n}{\omega_0^n}\right )\,(\omega_{t_i}^n-\omega_{t_ik_i}^n).
$$
The first integral above is bounded from below while the second is equal to
\begin{equation}\label{eq:k-14}
\ell\,\int_M\, (\psi_{t_i}-\psi_{t_ik_i})\,\left({\rm Ric}(\omega_0)-{\rm Ric}(\omega_{t_ik_i})\right)\,\wedge \sum_{a=1}^{n-1} \omega_{t_i}^a\wedge \omega_{t_ik_i}^{n-a}.
\end{equation}
Using \eqref{eq:k-13} and the fact that ${\rm Ric}(\omega_{t_ik_i})$ is bounded from above, we can show that the integral in \eqref{eq:k-14}
is uniformly bounded. It follows that $\bD(t_i)$ is bounded from below. Similarly, using the fact that ${\rm Ric}(\omega_{t_i})$ is bounded from above,
one can show that $\bD(t_i)$ is bounded from above. Therefore, we have
$$|\bF(t_i)\,-\,\bF(t_ik_i)|\,\le\, C.$$
It follows that $\bF(t_i)$ stay bounded while $\bJ(t_i)$ diverge to
$\infty$. We get a contradiction.
\end{proof}

Therefore, in order to prove Theorem \ref{th:main}, we only need to prove that $\bF$ is proper on the maximal algebraic torus $\bT$.
The remaining arguments are identical to corresponding parts in \cite{tian12} or
\cite{tian13} which is based on S. Paul's works \cite{paul12} and \cite{paul13}.

First we recall the Chow coordinate and Hyperdiscriminant of $M$ (\cite{paul08}):
Let $G(N-n-1,N)$ the Grassmannian of all $(N-n-1)$-dimensional subspaces in
$\CC P^N$. We define
\begin{equation}
\label{eq:chow-1}
Z_M\,=\,\{\, P \in G(N-n-1,N)\,|\,P\cap M\,\not=\,\emptyset\,\}.
\end{equation}
Then $Z_M$ is an irreducible divisor of $G(N-n-1,N)$ and determines a non-zero homogeneous polynomial $R_M \in \CC[M_{(n+1)\times (N+1)}] $,
unique modulo scaling, of degree $(n+1) d$, where $M_{k\times l}$ denotes the space of all $k\times l$ matrices.
We call $R_M$ the Chow coordinate or the $M$-resultant of $M$.

Next consider the Segre embedding:
$$M\times \CC P^{n-1} \subset \CC P^N\times \CC P^{n-1} \mapsto \mathbb{P}(M_{n\times (N+1)}^\vee),$$
where $M_{k\times l}^\vee$ denotes its dual space of $M_{k\times l}$. Then we define
\begin{equation}
\label{eq:chow-2}
Y_M\,=\,\{\, H\, \subset\,\mathbb{P}(M_{n\times (N+1)}^\vee)\,|\, T_p(M\times \CC P^{n-1}) \,\subset\, H~{\rm for~some}~p\,\}.
\end{equation}
Then $Y_M$ is a divisor in $\mathbb{P}(M_{n\times (N+1)}^\vee)$ of degree $\bar d\,=\,(n (n+1) - \mu )\, d$. This determines
a homogeneous polynomial
$\Delta_M$ in $\CC[M_{n\times (N+1)}]$, unique modulo scaling, of degree $\bar d$. We call $\Delta_M$ the hyperdiscriminant of $M$.

Set
$$r = (n+1) \,d \bar d, ~~\bV\,=\,C_r [M_{(n+1)\times (N+1)}],~~\bW\,=\,C_r [M_{n \times (N+1)}],$$
where $C_r[\CC^k]$ denotes the space of homogeneous polynomials of degree $r$ on $\CC^k$.
Following \cite{paul12}, we associate $M$ with the pair $(R(M), \Delta(M))$ in $\bV\times \bW$, where
$R(M)=R_M^{\bar d}$ and $\Delta(M)=\Delta_M^{(n+1)d}$.

Fix norms on $\bV$ and $\bW$, noth denoted by $||\cdot||$ for simplicity, we set
\begin{equation}
\label{eq:cm-8}
p_{v,w}\,=\,\log||w||\,-\,\log ||v||.
\end{equation}

The following was first observed by S. Paul, but the proof below was presented by myself in \cite{tian13}.
\begin{lemm}
\label{lemm:k-4}
Let $(\sigma, B)\,\mapsto\, \sigma( B): \bG\times \gl\,\mapsto\, \gl$ be the natural representation by left multiplication, where
$\gl$ denotes the space of all $(N+1)\times(N+1)$ matrices. Then we have
\begin{equation}\label{eq:cm-7}
|\,\bJ(\sigma) \,-\, p_{R(M), I^r} (\sigma)\,| \,\le\, C,
\end{equation}
where $I$ is the identity in $\gl$ and $I^r\in \bU=\gl^{\otimes r}$.
\end{lemm}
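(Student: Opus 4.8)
The plan is to identify, term by term, the purely algebraic quantity $p_{R(M),I^r}(\sigma)=\log\|\sigma\cdot I^r\|-\log\|\sigma\cdot R(M)\|$ from \eqref{eq:cm-8} with K\"ahler-geometric energy functionals evaluated at the Bergman potential $\psi_\sigma$, and then to recognize their difference as $\bJ(\sigma)$. First I would make the two norms explicit. Since the action on $\bU=\gl^{\otimes r}$ is diagonal left multiplication, $\sigma\cdot I^r=\sigma^{\otimes r}$, so for the natural cross-norm $\log\|\sigma\cdot I^r\|=r\log\|\sigma\|$; and since $R(M)=R_M^{\bar d}$ and any two norms on a fixed finite-dimensional space are equivalent up to a constant depending only on $N$, we have $\log\|\sigma\cdot R(M)\|=\bar d\log\|\sigma\cdot R_M\|+O(1)$ (with the sup-norm this is exact). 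As $r=(n+1)d\,\bar d$, this reduces the lemma to the single estimate $(n+1)d\log\|\sigma\|-\log\|\sigma\cdot R_M\|=\bar d^{-1}\bJ(\sigma)+O(1)$, i.e. to showing that the resultant norm, normalized against the maximal norm $\|\sigma\|^{(n+1)d}$, computes the $J$-energy.

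The heart of the matter is this normalized comparison. Using the explicit Bergman potential $\psi_\sigma=\tfrac1\ell\log(|\sigma Z|^2/|Z|^2)$ coming from \eqref{eq:cm-3}, where $Z$ is the homogeneous coordinate vector of a point of $M$, I would invoke S. Paul's energy-asymptotics identities (as recalled and reproved in \cite{tian13}). Both $(n+1)d\log\|\sigma\|$ and $\log\|\sigma\cdot R_M\|$ grow, to leading order as $\sigma\to\infty$, like $(n+1)d$ times the ``sup-energy'' governed by the top singular value of $\sigma$, i.e. by $\sup_M\psi_\sigma$; these leading terms cancel in the difference, and Paul's identities show that the remaining subleading part equals, up to a uniformly bounded error, a fixed multiple of $\tfrac1V\int_M\psi_\sigma\,\omega_0^n-E(\psi_\sigma)$, where $E$ denotes the Monge--Ampère (Aubin--Mabuchi) energy. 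After integration by parts this combination is exactly the expression defining $\bJ$ in \eqref{eq:cm-3''}. The calibrating exponents $\bar d$ and $(n+1)d$ and the degree $r=(n+1)d\,\bar d$ are designed precisely so that the leading coefficients of the two energies agree, leaving only the bounded remainder $C$.

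The step I expect to be the genuine obstacle is the uniform bound over all of $\bG$, as opposed to merely matching leading slopes along one-parameter subgroups. Note that, unlike the K-energy $\bF$ handled in Lemma \ref{lemm:k-3}, the resultant term $\log\|\sigma\cdot R_M\|$ is \emph{not} invariant, even up to $O(1)$, under right multiplication by $\bK=U(N+1)$, so one cannot simply reduce to a maximal torus $\bT$ and argue by convexity of support functions on the Chow polytope; the comparison must be carried out globally on $\bG$. This is the substantive content of Paul's work: one must control $\int_M\psi_\sigma\,\omega_0^{\,j}\wedge\omega_{\psi_\sigma}^{\,n-j}$ uniformly in $\sigma$ in terms of $\log\|\sigma\cdot R_M\|$, which rests on the fine structure of the Chow form and of the Monge--Ampère measures of the metrics $\tfrac1\ell\sigma^*\omega_{FS}$. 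Granting those identities, the remaining bookkeeping --- assembling the two energy pieces and absorbing the norm-equivalence constants into $C$ --- is routine.
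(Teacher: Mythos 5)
Your first paragraph matches the paper's (implicit) reduction exactly: $\sigma(I^r)=\sigma^{\otimes r}$ gives $\log\|\sigma(I^r)\|=r\log\|\sigma\|$, and $R(M)=R_M^{\bar d}$ lets you trade $\log\|\sigma(R(M))\|$ for $\bar d\,\log\|\sigma(R_M)\|$ up to $O(1)$, so everything comes down to comparing $(n+1)d\,\log\|\sigma\|-\log\|\sigma(R_M)\|$ with $\bJ(\sigma)$ (up to normalizing constants, which the paper also suppresses). The paper then finishes in two clean steps: (i) it quotes from \cite{paul04} the \emph{exact, global} identity $(n+1)\,\bJ(\sigma)=(n+1)\int_M\psi_\sigma\,\omega_0^n-\log\|\sigma(R_M)\|^2$, valid for every $\sigma\in\bG$ (this is the Zhang--Paul ``Chow norm equals Aubin--Mabuchi energy'' theorem, already packaged with the identity $J_{\omega_0}(\varphi)=\frac1V\int_M\varphi\,\omega_0^n-E(\varphi)$ that you invoke separately); and (ii) it proves, by a direct computation with an orthonormal basis $\{S_j\}$ and $\psi_\sigma=\log(\sum_i\|\sum_j\vartheta_{ij}S_j\|^2)$, the elementary uniform estimate $\bigl|\log\|\sigma\|^2-\frac1d\int_M\psi_\sigma\,\omega_0^n\bigr|\le C$ for the Hilbert--Schmidt norm. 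Substituting (ii) into (i) gives \eqref{eq:cm-7} at once.

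Your middle paragraph, however, mis-describes the mechanism, and this is a genuine gap rather than a stylistic difference. You claim that both $(n+1)d\,\log\|\sigma\|$ and $\log\|\sigma(R_M)\|$ grow to leading order like a common ``sup-energy'' $\sup_M\psi_\sigma$, that these leading terms cancel, and that $\bJ$ survives as a subleading remainder. That picture is inconsistent with the identity you need: since $\log\|\sigma(R_M)\|^2=(n+1)\bigl[\int_M\psi_\sigma\,\omega_0^n-\bJ(\sigma)\bigr]$ (up to normalization), the Chow-norm term falls short of the sup-type term by $(n+1)\bJ(\sigma)$, and $\bJ$ is \emph{not} subleading --- it can be of the same order as $\int_M\psi_\sigma\,\omega_0^n$, and the regime $\bJ\to\infty$ is exactly the one the lemma is for. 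What cancels is the linear term $\int_M\psi_\sigma\,\omega_0^n$, and the only estimate needed beyond the exact identity is (ii) above, which you never isolate: you compare $\log\|\sigma\|$ with $\sup_M\psi_\sigma$ instead of with the average $\frac1d\int_M\psi_\sigma\,\omega_0^n$, and the statement that this average is within $O(1)$ of the sup is precisely the nontrivial elementary content of (ii). Your closing paragraph also misplaces the difficulty: because the Zhang--Paul identity is exact and holds for all $\sigma\in\bG$, there is no residual ``uniformity over $\bG$ versus one-parameter subgroups'' issue to overcome; and your assertion that $\log\|\sigma\cdot R_M\|$ fails to be right-$\bK$-invariant up to $O(1)$ is false --- it is so invariant, since $|\psi_{\sigma k}-\psi_\sigma|\le\log(N+1)$ as in \eqref{eq:k-13} and the Aubin--Mabuchi energy is Lipschitz with respect to the sup-norm of the potential.
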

\begin{proof}
It is known (cf. \cite{paul04})
$$(n+1)\,\bJ (\sigma)\,= \,  (n+1)\, \int_M \,\psi_\sigma \,\omega_0^n\,-\, \log ||\sigma (R_M)||^2.
$$
This is equivalent to
\begin{equation}\label{eq:J-1}
(n+1) \,\bar d  \,\bJ (\sigma)\,= \,  r \, \int_M \,\psi_\sigma \,\frac{\omega_0^n}{d}\,-\, \log ||\sigma (R(M))||^2.
\end{equation}
If we write $\sigma\in \SL(N+1,\CC)$ as a $(N+1)\times (N+1)$-matrix $(\vartheta_{ij})$ with determinant one, then
the Hilbert-Schmidt norm of $\sigma$ is given by
$$||\sigma||^2\,=\,\sum_{i,j=0}^{N} |\vartheta|^2.$$
Clearly, we have
$$\psi_\sigma\,=\, \log  \left (\sum_{i=0}^{N} \,||\sum_{j=0}^N \vartheta_{ij} S_j ||^2\right ),$$
where $\{S_j\}_{0\le j\le N}$ is an orthonormal basis. By direct computations, we can easily show
$$\left |\,\log ||\sigma||^2\,-\,\int_M\, \log \left (\sum_{i=0}^{N}\, ||\sum_{j=0}^N \vartheta_{ij} S_j ||^2\right ) \,\frac{\omega_0^n}{d}\,\right|  \,\le\,C.$$
Combining the above two with \eqref{eq:J-1}, we get \eqref{eq:cm-7}.
\end{proof}

\begin{lemm}\label{lemm:k-5}
Let $\bV$, $\bW$ and $\bU$ be as above and $\bG_0$ be an one-parameter algebraic subgroup.
Then $\bF$ is not proper on $\bT$ (resp. $\bG_0$) if and only if
the orbit of $[R(M),\Delta(M)]\times [R(M),I^r]$ under $\bT$
(resp. $\bG_0$) has a limit point in
$$\left ( \mathbb{P} (\bV\oplus \bW)\backslash \mathbb{P} (\{0\}\oplus \bW)\right )\times \mathbb{P}(\{0\}\times \bU).$$
\end{lemm}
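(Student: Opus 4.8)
The plan is to reduce both $\bF$ and $\bJ$ to ratios of norms, and then read off non-properness as a purely geometric statement about orbit closures in the two projective factors. There are exactly two analytic inputs: Lemma \ref{lemm:k-4}, which gives $\bJ(\sigma) = p_{R(M),I^r}(\sigma) + O(1)$, and the companion asymptotic expansion of $\bF$ established by S. Paul (see \cite{paul12}), namely $\bF(\sigma) = p_{R(M),\Delta(M)}(\sigma) + O(1)$, where throughout $p_{v,w}(\sigma) = \log\|\sigma\cdot w\| - \log\|\sigma\cdot v\|$ and $\sigma$ acts through the relevant representations on $\bV$, $\bW$, $\bU$. Modulo bounded terms these identify $\bF(\sigma_i)\to +\infty$ with $\|\sigma_i R(M)\|/\|\sigma_i \Delta(M)\| \to 0$, and $\bJ(\sigma_i)\to\infty$ with $\|\sigma_i R(M)\|/\|\sigma_i I^r\|\to 0$.

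First I would record the correct logical form of non-properness. By definition $\bF$ is proper on $\bT$ means $\bF(\sigma_i)\to\infty$ whenever $\bJ(\sigma_i)\to\infty$; hence $\bF$ fails to be proper exactly when there is a sequence $\sigma_i\in\bT$ with $\bJ(\sigma_i)\to\infty$ while $\bF(\sigma_i)\not\to+\infty$, i.e. $\bF(\sigma_i)$ stays bounded above along a subsequence. The possibility $\bF(\sigma_i)\to-\infty$ is automatically subsumed: since $\bJ$ is, up to a bounded error, the distance function on the symmetric space $\bG/\bK$, any sequence escaping to infinity in $\bG/\bK$ already has $\bJ\to\infty$, and $\bF$ is bounded on bounded regions, so an unbounded-below sequence for $\bF$ forces $\bJ\to\infty$.

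Next I would translate these two scalar conditions into the geometry of $\mathbb P(\bV\oplus\bW)\times\mathbb P(\bV\oplus\bU)$. Applying $\sigma_i$ to the base point sends it to $[\sigma_i R(M),\sigma_i\Delta(M)]\times[\sigma_i R(M),\sigma_i I^r]$. By compactness of the product of projective spaces one may always pass to a subsequence along which both factors converge, so the only content is the location of the limits. The second factor converges into $\mathbb P(\{0\}\times\bU)$ precisely when its $\bV$-component $\sigma_i R(M)$ becomes negligible against $\sigma_i I^r$, that is when $\bJ(\sigma_i)\to\infty$. The first factor converges to a point off $\mathbb P(\{0\}\oplus\bW)$ precisely when $\|\sigma_i\Delta(M)\|/\|\sigma_i R(M)\|$ stays bounded along the subsequence, that is when $\bF(\sigma_i)$ is bounded above. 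Matching these along a common subsequence, a limit point of the orbit lying in $\left(\mathbb P(\bV\oplus\bW)\setminus\mathbb P(\{0\}\oplus\bW)\right)\times\mathbb P(\{0\}\times\bU)$ is exactly a witness that $\bF$ is not proper, and conversely a witness of non-properness yields, after extracting a convergent subsequence, such a limit point. The argument for a one-parameter subgroup $\bG_0$ is identical, replacing the sequence $\sigma_i$ by the curve $\sigma(t)$ and taking the limit as $t\to 0$.

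The hard part will not be any estimate but the logical bookkeeping: ensuring that \emph{``limit point off $\mathbb P(\{0\}\oplus\bW)$''} matches \emph{``$\bF$ bounded above''} rather than the stronger \emph{``$\bF$ bounded on both sides''}; that the same subsequence simultaneously realizes the convergence of both projective factors; and that the bounded-below clause implicit in properness is genuinely absorbed by the divergence of $\bJ$, as noted above. Granting Paul's asymptotic expansion of $\bF$ in terms of $R(M)$ and $\Delta(M)$, together with Lemma \ref{lemm:k-4}, the lemma is then a formal reformulation and carries no further analytic content.
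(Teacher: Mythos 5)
Your proposal is correct and is essentially the paper's own argument: both rest on Lemma \ref{lemm:k-4} together with Paul's estimate $|\bF(\sigma)-a_n\,p_{R(M),\Delta(M)}(\sigma)|\le C$ from \cite{paul08}, and then identify divergence of $\bJ$ with the $\bV$-component dying in the $\mathbb{P}(\bV\oplus\bU)$ factor and boundedness above of $\bF$ with the $\bV$-component surviving in the $\mathbb{P}(\bV\oplus\bW)$ factor --- the paper expresses this through Paul's $\log\tan^2$ distance formulas, you through norm ratios plus compactness of projective space, which is the same reduction. Your side remark that $\bJ$ is, up to bounded error, the distance function on $\bG/\bK$ is unjustified as stated, but it is also unnecessary: a sequence with $\bF(\sigma_i)\to-\infty$ is already covered by the ``bounded above along a subsequence'' case, so nothing in your argument depends on it.
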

\begin{proof} First we note that $\left (\mathbb{P}(\bV\oplus \bW)\backslash \mathbb{P}(\{0\}\oplus \bW)\right )\times \mathbb{P}(\{0\}\times \bU)$ is $\bT$-invariant.
It follows from \cite{paul08} that for all $\sigma\in \bG$, we have
\begin{equation}\label{eq:cm-8}
|\,\bF(\sigma ) \,-\, a_n\,p_{R(M),\Delta(M)}(\sigma) \,|\, \leq\,C,
\end{equation}
where $a_n > 0 $ and $C$ are uniform constants.

By Lemma \ref{lemm:k-4} and \eqref{eq:cm-8}, we see that if $\bF$ is not proper on $\bT$ (resp. $\bG_0$), then there are $\sigma_i\in\bT$ (resp. $\bG_0$) such that
$p_{R(M), \Delta (M)}(\sigma_i)$ stay bounded while $p_{R(M), I^r} (\sigma_i)$ goes to $\infty$.
In \cite{paul08}, S. Paul showed
$$p_{R(M),\Delta(M)}(\sigma)\,=\,\log \tan^2 d(\sigma ([R(M),\Delta(M)]), \sigma([R(M), 0]))$$
and
$$p_{R(M),I^r}(\sigma)\,=\,\log \tan^2 d(\sigma ([R(M),I^r]), \sigma ([R(M),0])),
$$
where $d(\cdot,\cdot)$ denotes the distance in $\mathbb{P}(\bV\oplus\bW)$ with respect to the Fubini-Study metric.
Therefore, the limits of $\sigma_i([R(M), I^r])$ lie in $\mathbb{P}(\{0\}\oplus \bU)$ while limits of $\sigma_i([R(M), \Delta(M)])$ stay in
$\mathbb{P}(\bV\oplus \bW)\backslash \mathbb{P}(\{0\}\oplus \bW)$.

The other direction can be easily proved by reversing the above arguments.
The lemma is proved.
\end{proof}
Now we deduce Theorem \ref{th:main} from Lemma \ref{lemm:k-5}. If $M$ is not CM-stable with respect to $L^\ell$,
then there are $v\in \bV,w\in\bW,u\in \bU$ with $u\not= 0, v\not= 0$ satisfying:
If we denote $ y=[v,w]\times[0,u]$
and $x=[R(M),\Delta(M)]\times [R(M),I^r]$,
then $y$ is in the closure of the $\bT$-orbit of $x$.

Choose $\bT$-invariant hyperplanes
$\bV_0\subset\bV$ and $\bU_0\subset \bU$
%, which can be naturally identified with $\mathbb{P}(\bV)\backslash \mathbb{P}(\bV_0)$ 
%and $\mathbb{P}(\bU)\backslash \mathbb{P}(\bU_0)$, 
such that $x\in \bE$ and $y\in \bE_0$, where
$$E = \left (\mathbb{P}(\bV\oplus\bW)\backslash \mathbb{P}(\bV_0\oplus\bW)\right)
\times\left(\mathbb{P}(\bV\oplus\bU)\backslash \mathbb{P}(\bV\oplus\bU_0)\right )$$
and
$$\bE_0= \left (\mathbb{P}(\bV\oplus\bW)\backslash \mathbb{P}(\bV_0\oplus\bW)\right) \times \left (\mathbb{P}(\{0\}\oplus\bU)\backslash \mathbb{P}(\{0\}\oplus\bU_0)\right).$$
Clearly, $\bE_0$ is a closed subspace of $\bE$ and the orbit $\bT  y$ lies in $\bE_0$. Also both $\bE$ and $\bE_0$ are affine. They
are actually isomorphic to $\bV_0\times\bW\times \bV\times\bU_0$ and $\bV_0\times\bW\times \{0\}\times\bU_0$, respectively.

By taking an orbit in the closure of $\bT y$ if necessary, we may assume that $\bT y$ is closed in $\bE_0$. Then, by a well-known
result of Richardson (cf. \cite{paul12} and also \cite{tian13}), there is an one-parameter algebraic subgroup $\bG_0$ such that the closure of $\bG_0 x$ contains a
point in $\bE_0$ which is a subset of 
$$ \left (\mathbb{P}(\bV\oplus \bW)\backslash \mathbb{P}(\{0\}\oplus \bW)\right )\times \mathbb{P}(\{0\}\times \bU) .$$ 
By Lemma \ref{lemm:k-5} and Theorem \ref{th:k-2}, this contradicts to the K-stability of $M$.
Thus, the proof of Theorem \ref{th:main} is completed.

\section{Proving Theorem \ref{th:main} in general cases}

In this section, we prove Theorem \ref{th:main} in full generality. It is clear that only Lemma \ref{lemm:k-3} and Lemma \ref{lemm:k-5} need to be modified, all other arguments in last section go through without change.

First we prove a generalized version of Lemma \ref{lemm:k-3}.
\begin{lemm}\label{lemm:k-6}
Let $\bT$ be any maximal algebraic torus of $\bG$. If
the restriction $\bF|_{\bT}$ is proper in the following sense: For any sequence $\sigma_i\in \bT$,
\begin{equation}\label{eq:cm-4-1}
\bF(\sigma_i) \rightarrow\infty~{\rm whenever}~\inf_{\tau \in Aut(M,L)}\bJ(\sigma_i\tau) \rightarrow \infty,
\end{equation}
then $M$ is CM-stable with respect to $L^\ell$.
\end{lemm}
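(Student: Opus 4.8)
The plan is to imitate the proof of Lemma \ref{lemm:k-3}, now carrying the automorphism group through the reduction to the torus. Suppose, for contradiction, that $M$ is not CM-stable with respect to $L^\ell$; by Definition \ref{defi:cm-1} there is a sequence $\sigma_i\in\bG$ with $\bF(\sigma_i)$ bounded while $\inf_{\tau\in\Aut_0(M,L)}\bJ(\sigma_i\tau)\to\infty$. Writing the Cartan decomposition $\sigma_i=k_i't_ik_i$ with $k_i,k_i'\in\bK$ and $t_i\in\bT$, the left $\bK$-invariance of $\bF$ and $\bJ$ (coming from the $\bK$-invariance of $\omega_{FS}$) gives $\bF(t_ik_i)=\bF(\sigma_i)$ bounded, and the estimate \eqref{eq:k-13} together with the computation already carried out in Lemma \ref{lemm:k-3} yields $|\bF(t_i)-\bF(t_ik_i)|\le C$; hence $\bF(t_i)$ stays bounded.

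It then remains to prove that $\inf_{\tau\in\Aut(M,L)}\bJ(t_i\tau)\to\infty$, for then the assumed properness of $\bF|_\bT$ in the sense of \eqref{eq:cm-4-1} forces $\bF(t_i)\to\infty$, the required contradiction. I argue by contraposition: assuming that, after passing to a subsequence, there are $\tau_i\in\Aut(M,L)$ with $\bJ(t_i\tau_i)\le C$, I aim to produce $\rho_i\in\Aut_0(M,L)$ with $\bJ(\sigma_i\rho_i)$ bounded, contradicting $\inf_{\tau\in\Aut_0(M,L)}\bJ(\sigma_i\tau)\to\infty$. Since $\Aut_0(M,L)$ is a normal subgroup of finite index in $\Aut(M,L)$, after a further subsequence I may write $\tau_i=\rho_i'\gamma$ for a fixed coset representative $\gamma$ and $\rho_i'\in\Aut_0(M,L)$; because $\gamma$ is one of finitely many fixed biholomorphisms, pullback by $\gamma$ distorts the Dirichlet-type integrals defining $\bJ$ by fixed multiplicative and additive constants, so $\bJ(t_i\tau_i)\le C$ gives $\bJ(t_i\rho_i')\le C'$ with $\rho_i'\in\Aut_0(M,L)$. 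Combining this with the left $\bK$-invariance $\bJ(\sigma_i\rho)=\bJ(t_ik_i\rho)$ and a bounded-change estimate for $\bJ$ under the remaining unitary factor should then yield $\bJ(\sigma_i\rho_i)\le C''$ for a suitable $\rho_i\in\Aut_0(M,L)$.

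The hard part is precisely this last step. After the Cartan decomposition the unitary $k_i$ is sandwiched between $t_i$ and the automorphism, and a unitary cannot in general be commuted past an element of $\Aut(M,L)$, so the right-$\bK$ estimate \eqref{eq:k-13} does not apply verbatim to pass between $\bJ(t_i\rho_i')$ and $\bJ(t_ik_i\rho_i)$. To overcome this I intend to fix the embedding so that a maximal compact subgroup of $\Aut(M,L)$ is contained in $\bK$ and to choose $\bT$ to contain a maximal torus of $\Aut_0(M,L)$; with this adaptation the automorphism directions are aligned with the $\bK\cdot\bT\cdot\bK$ decomposition, and the residual unitary can be absorbed into the infimum over the full group $\Aut(M,L)$. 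This is exactly the point at which replacing $\Aut_0(M,L)$ by $\Aut(M,L)$ in the torus hypothesis becomes essential: the enlargement accommodates both the finite component group generated in the normal-subgroup reduction above and the unitary ambiguity left by the Cartan decomposition, and it is the only modification needed relative to the proof of Lemma \ref{lemm:k-3}.
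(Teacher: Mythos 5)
Your setup follows the paper's own strategy: argue by contradiction, apply the Cartan decomposition $\sigma_i=k_i't_ik_i$, use left $\bK$-invariance to get $\bF(t_ik_i)=\bF(\sigma_i)$, and re-run the computation of Lemma \ref{lemm:k-3} to obtain $|\bF(t_i)-\bF(t_ik_i)|\le C$. You also correctly isolate the remaining task: show $\inf_{\tau\in\Aut(M,L)}\bJ(t_i\tau)\to\infty$. But exactly at this point your argument has a genuine gap, and the device you propose to fill it does not work. The difficulty is not resolved by ``aligning'' a maximal compact subgroup of $\Aut(M,L)$ inside $\bK$, or by choosing $\bT$ to contain a maximal torus of $\Aut_0(M,L)$: the Cartan factors $k_i$ obtained by decomposing an arbitrary divergent sequence $\sigma_i$ are arbitrary elements of $U(N+1)$, not automorphisms of $M$, so no choice of embedding lets you ``absorb the residual unitary into the infimum over $\Aut(M,L)$''; and indeed your text never produces an estimate at this step --- it ends with ``should then yield''. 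The proposal therefore stops precisely where the proof has to begin.

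The missing idea --- the heart of the paper's proof --- is that no commutation of $k_i$ past $\tau$ is needed at all. For any $\tau\in\Aut(M,L)$ one has the exact transformation rule \eqref{eq:cm-4-3},
$$\psi_{\sigma\tau}\,=\,\varphi_\tau\,+\,\psi_\sigma\circ\tau,\qquad \tau^*\omega_0\,=\,\omega_0+\sqrt{-1}\,\partial\bar\partial\,\varphi_\tau .$$
Applying this with $\sigma=t_i$ and $\sigma=t_ik_i$, the common term $\varphi_\tau$ cancels and
$$\psi_{t_i\tau}-\psi_{t_ik_i\tau}\,=\,\bigl(\psi_{t_i}-\psi_{t_ik_i}\bigr)\circ\tau,$$
so the sup-norm bound $|\psi_{t_i}-\psi_{t_ik_i}|\le\log(N+1)$ of \eqref{eq:k-13} is preserved, uniformly in $\tau$ and $i$. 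A uniform $C^0$ bound on the difference of potentials yields $|\bJ(t_i\tau)-\bJ(t_ik_i\tau)|\le C$ with $C$ independent of $i$ and $\tau$; taking the infimum over $\tau$ transfers the divergence $\inf_\tau\bJ(\sigma_i\tau)=\inf_\tau\bJ(t_ik_i\tau)\to\infty$ to $\inf_\tau\bJ(t_i\tau)\to\infty$, and the properness hypothesis on $\bF|_{\bT}$ gives the contradiction. (Your finite-index coset step, which compares the infimum over $\Aut(M,L)$ in \eqref{eq:cm-4-1} with the infimum over $\Aut_0(M,L)$ in Definition \ref{defi:cm-1} up to bounded distortion of $\bJ$, is a reasonable supplementary observation --- the paper glosses over this mismatch --- but it cannot substitute for the uniform estimate above, which is what actually carries the argument.)
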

\begin{proof}
We also prove it by contradiction. Suppose that we have a sequence $\sigma_i\in\bG$ such that
$\bF(\sigma_i)$ stay bounded while $\inf_{\tau \in Aut(M,L)}\bJ(\sigma_i\tau)$ diverge to $\infty$.

As before, we use the Cartan decomposition: $\bG= \bK \cdot\bT\cdot\bK$, where $\bK = U(N+1)$, and
write $\sigma_i = k_i' t_i k_i$ for $k_i,k_i'\in \bK$ and $t_i\in \bT$. Then
we have that $\bF(t_i k_i)= \bF(\sigma_i) $ stay bounded while $\bJ(t_ik_i\tau)=\bJ(\sigma_i\tau)$ diverge to
$\infty$.

Using the arguments in the proof of Lemma \ref{lemm:k-3}, we can also prove that for some constant $C > 0$,
$$|\bF(t_i)\,-\,\bF(t_ik_i)|\,\le\, C.$$
It remains to prove
$$\inf_{\tau \in Aut(M,L)}\bJ(t_i\tau )\,\rightarrow\,\infty.$$
Each $k_i$ is represented by a unitary matrix $(\gamma_{a b})$ with $|\gamma_{a b}| \le 1$. Let $S_a = t_i(z_a)$, where  $a=0,\cdots, N$, and
$z_a$ is the $a$-th coordinate of $\CC P^N$, then all these $S_a$ form a basis of $H^0(M, L^\ell)$ and we have
$$\psi_{t_i}\,=\,\log (\sum_{a=0}^N\, ||S_a||^2_0),~~~\psi_{t_i k_i}\,=\,\log (\sum_{a=0}^N\, ||\sum_{b=0}^N \gamma_{ab} S_b||^2_0).
$$
It follows
\begin{equation}\label{eq:cm-4-2}
|\psi_{t_i} - \psi_{t_i k_i}|\,\le\,\log (N+1).
\end{equation}
Since $\tau$ is an automorphism of $M$, we have
\begin{equation}\label{eq:cm-4-3}
\psi_{t_i\tau}\,=\,\varphi_\tau + \psi_{t_i}\cdot\tau,~~~\psi_{t_i k_i\tau}\,=\,\varphi_\tau + \psi_{t_ik_i}\cdot\tau,
\end{equation}
where $\varphi_\tau$ is a function satisfying:
$$\tau^*\omega_0= \omega_0 +\sqrt{-1}\partial\bar\partial \,\varphi_\tau.$$

It follows from \eqref{eq:cm-4-2} and \eqref{eq:cm-4-3}
$$
|\psi_{t_i\tau} - \psi_{t_i k_i\tau }|\,\le\,\log (N+1).
$$
This implies that $|\bJ(t_i\tau)-\bJ(t_ik_i\tau)|$ is uniformly bounded.
Therefore, $\bF(t_i)$ stay bounded while $\inf_{\tau\in Aut(M,L)} \bJ(t_i\tau)$ diverge to
$\infty$. We get a contradiction.
\end{proof}

In the following, we will fix a maximal algebraic torus $\bT_0$ in $Aut_0(M,L)$.
We will prove that the K-stability implies
\begin{equation}\label{eq:cm-4-5}
\bF(t_i) \rightarrow\infty~{\rm whenever}~\inf_{\tau \in \bT_0}\bJ(t_i\tau) \rightarrow \infty, ~~~\forall\,\{t_i\}\subset \bT.
\end{equation}
Clearly, it follows from this and Lemma \ref{lemm:k-6} that $M$ is CM-stable with respect to $L^\ell$.

We will adopt the notations from last section. Choose an algebraic subtorus $\bT_1$ of $\bT$ such that $\bT =\bT_0\cdot \bT_1$ and $\bT_1$ is transversal $\bT_0$.

\begin{lemm}\label{lemm:k-7}
Let $\bV$, $\bW$ and $\bU$ be those in Lemma \ref{lemm:k-5}. If \eqref{eq:cm-4-5} is false,
then the orbit of $[R(M),\Delta(M)]\times [R(M),I^r]$ under $\bT_1$
has a limit point in
$$\left (\mathbb{P}(\bV\oplus \bW)\backslash \mathbb{P} (\{0\}\oplus \bW)\right )\times \mathbb{P}(\{0\}\times \bU).$$
\end{lemm}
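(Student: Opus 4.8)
The plan is to mirror the proof of Lemma~\ref{lemm:k-5}, but to first strip off the $\bT_0$-direction so that everything takes place along $\bT_1$. Assume \eqref{eq:cm-4-5} fails. Then, after passing to a subsequence, there is $\{t_i\}\subset\bT$ with $\bF(t_i)\le C$ while $\inf_{\tau\in\bT_0}\bJ(t_i\tau)\to\infty$. Since $\bT=\bT_0\cdot\bT_1$ is abelian, write $t_i=\rho_i\,s_i$ with $\rho_i\in\bT_1$ and $s_i\in\bT_0$. By Paul's distance formulas recalled in the proof of Lemma~\ref{lemm:k-5}, it suffices to produce a convergent subsequence of $\rho_i\cdot x$ with $x=[R(M),\Delta(M)]\times[R(M),I^r]$; concretely, I must show that $p_{R(M),\Delta(M)}(\rho_i)$ stays bounded from above (so the first factor stays off $\mathbb{P}(\{0\}\oplus\bW)$) and that $p_{R(M),I^r}(\rho_i)\to\infty$ (so the second factor limits into $\mathbb{P}(\{0\}\oplus\bU)$). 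Compactness of $\mathbb{P}(\bV\oplus\bW)\times\mathbb{P}(\bV\oplus\bU)$ then yields the desired limit point of the $\bT_1$-orbit of $x$.

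The divergence statement is the easy half and is handled by a reindexing trick. Because $\bT$ is abelian, $t_i\tau=\rho_i(s_i\tau)$, and as $\tau$ ranges over $\bT_0$ so does $\tau'=s_i\tau$; hence $\inf_{\tau\in\bT_0}\bJ(t_i\tau)=\inf_{\tau'\in\bT_0}\bJ(\rho_i\tau')$. By \eqref{eq:cm-7}, $\bJ(\rho_i\tau')=p_{R(M),I^r}(\rho_i\tau')+O(1)$ uniformly, so $\inf_{\tau'\in\bT_0}p_{R(M),I^r}(\rho_i\tau')\to\infty$. Taking $\tau'=1$ shows that $p_{R(M),I^r}(\rho_i)$ is bounded below by a quantity tending to infinity, whence $p_{R(M),I^r}(\rho_i)\to\infty$.

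The boundedness of $p_{R(M),\Delta(M)}(\rho_i)$ is the delicate half, and here lies the main obstacle: unlike the $\bJ$-estimate there is no infimum available to absorb the $\bT_0$-translate, so I must control the genuine difference between $t_i$ and $\rho_i=t_i s_i^{-1}$. From $\bF(t_i)\le C$ and the estimate $|\bF(\sigma)-a_n\,p_{R(M),\Delta(M)}(\sigma)|\le C$ (with $a_n>0$) I get $p_{R(M),\Delta(M)}(t_i)$ bounded above. Since $\bT_0\subset\Aut(M,L)$ preserves $M$, it fixes the Chow point $[R(M)]$ and the hyperdiscriminant point $[\Delta(M)]$ individually, so each $\tau\in\bT_0$ acts on $R(M)$ and $\Delta(M)$ by characters $\chi_V(\tau),\chi_W(\tau)$, and therefore $p_{R(M),\Delta(M)}(\rho_i)=p_{R(M),\Delta(M)}(t_i)-\log(|\chi_W(s_i)|/|\chi_V(s_i)|)$. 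The crux is that this correction is in fact bounded, because the ratio $|\chi_W/\chi_V|$ is trivial on $\bT_0$. I would deduce this from Lemma~\ref{lemm:k-1}: for a one-parameter subgroup $\lambda\subset\bT_0$ the degeneration is trivial ($M_0=M$, $a(\lambda)=0$), so $\bF(\lambda(t))=-f_{M,L}(\lambda)\log|t|^2+O(1)$; comparing with the linear-in-$\log|t|$ asymptotics of $p_{R(M),\Delta(M)}(\lambda(t))$ identifies the weight of $|\chi_W/\chi_V|$ along $\lambda$ with a multiple of $f_{M,L}(\lambda)$. Applying this to both $\lambda$ and $\lambda^{-1}$ and invoking K-stability forces $f_{M,L}(\lambda)=0$, so the weight vanishes along every rational one-parameter subgroup and hence $|\chi_W/\chi_V|\equiv1$ on $\bT_0$; equivalently, the composite point $[R(M),\Delta(M)]\in\mathbb{P}(\bV\oplus\bW)$ is $\bT_0$-fixed. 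With this, $p_{R(M),\Delta(M)}(\rho_i)=p_{R(M),\Delta(M)}(t_i)+O(1)$ is bounded above, which together with the second paragraph supplies the two required estimates and completes the proof.
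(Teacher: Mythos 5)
Your proof is correct, and its skeleton coincides with the paper's: you use the same decomposition $t_i=\rho_i s_i$ along $\bT=\bT_0\cdot\bT_1$, the same infimum/reindexing observation to get $\bJ(\rho_i)\ge\inf_{\tau\in\bT_0}\bJ(t_i\tau)\to\infty$ (hence $p_{R(M),I^r}(\rho_i)\to\infty$ by Lemma \ref{lemm:k-4}), and the same Paul distance formulas plus compactness to produce the limit point. Where you genuinely diverge from the paper is in the delicate half, the boundedness of the first-factor coordinate. The paper disposes of it in one line by asserting $\bF(s_i)=\bF(t_i)$, i.e.\ invariance of the K-energy under right translation by $\bT_0\subset\Aut_0(M,L)$, and then converts to $p_{R(M),\Delta(M)}$ via \eqref{eq:cm-8}; note that this invariance is not a formal identity: by the cocycle property $\bF(s_i\tau_i)=\bF(s_i)+\bM_{\omega_0}(\varphi_{\tau_i})$, and the correction term vanishes only because the classical Futaki character vanishes on $\Aut_0(M,L)$, itself a consequence of K-(semi)stability that the paper leaves implicit. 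You instead convert $\bF(t_i)\le C$ into an upper bound on $p_{R(M),\Delta(M)}(t_i)$ first, and transfer boundedness across the $\bT_0$-factor algebraically: $\bT_0$ fixes $[R(M)]$ and $[\Delta(M)]$, hence acts by characters $\chi_V,\chi_W$, and you prove $|\chi_W/\chi_V|\equiv 1$ on $\bT_0$ by comparing Paul's asymptotics with Lemma \ref{lemm:k-1} along $\lambda$ and $\lambda^{-1}$ in $\bT_0$ and invoking K-stability to force $f_{M,L}(\lambda)=0$. The two transfers rest on the same underlying fact (vanishing of the Futaki character on $\bT_0$), but your route makes the use of the K-stability hypothesis explicit and stays entirely within Paul's algebraic framework, effectively supplying the justification that the paper's one-line claim omits; the paper's route is shorter if one grants the standard analytic fact about the K-energy along automorphism orbits.
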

\begin{proof} First we note that $\left (\mathbb{P}(\bV\oplus \bW)\backslash \mathbb{P} (\{0\}\oplus \bW)\right )\times \mathbb{P}(\{0\}\times \bU)$ is $\bT_1$-invariant.
If \eqref{eq:cm-4-5} is false, then
there is a sequence $\{t_i\}\subset \bT$ such that
$\bF(t_i)$ stays bounded while $\inf_{\tau \in \bT_0}\bJ(t_i\tau) $ diverge to $\infty$.
Write $t_i = s_i \tau_i $ for $s_i\in \bT_1$ and $\tau_i\in\bT_0$, then $\bF(s_i)=\bF(t_i)$ stays bounded while
$$\bJ(s_i)\,=\,\bJ(t_i\tau_i^{-1}) \,\ge\,\inf_{\tau \in \bT_0}\bJ(t_i\tau) \,\rightarrow\,\infty.$$
It follows from Lemma \ref{lemm:k-4} and \eqref{eq:cm-8} that $p_{R(M), \Delta (M)}(s_i)$ stay bounded while $p_{R(M), I^r} (s_i)$ goes to $\infty$.
Then, as we argued in the proof of Lemma \ref{lemm:k-5},
the limits of $s_i([R(M), I^r])$ lie in $\mathbb{P}(\{0\}\times \bU)$
while limits of $s_i([R(M), \Delta(M)])$ lie in $\mathbb{P}(\bV\oplus \bW)\backslash \mathbb{P} (\{0\}\oplus \bW)$.
The lemma is proved.
\end{proof}

Now we deduce Theorem \ref{th:main} in general cases from Lemma \ref{lemm:k-7}.

If $M$ is not CM-stable with respect to $L^\ell$,
there are $v\in \bV,w\in\bW,u\in \bU$ with $u\not=0, v\not= 0$ satisfying:  $y$ is in the closure of the $\bT_1$-orbit of
$x$, where $ y=[v,w]\times[0,u]$ and $x =[R(M),\Delta(M)]\times [R(M),I^r]$ as before.

Choose $\bT_1$-invariant hyperplanes
$\bV_0\subset\bV$ and $\bU_0\subset \bU$
%, which can be naturally identified with $\mathbb{P}(\bV)\backslash \mathbb{P}(\bV_0)$ 
%and $\mathbb{P}(\bU)\backslash \mathbb{P}(\bU_0)$, 
such that $x\in \bE$ and $y\in \bE_0$, where
$$E = \left (\mathbb{P}(\bV\oplus\bW)\backslash \mathbb{P}(\bV_0\oplus\bW)\right)
\times\left(\mathbb{P}(\bV\oplus\bU)\backslash \mathbb{P}(\bV\oplus\bU_0)\right )$$ 
and
$$\bE_0= \left (\mathbb{P}(\bV\oplus\bW)\backslash \mathbb{P}(\bV_0\oplus\bW)\right) \times \left (\mathbb{P}(\{0\}\oplus\bU)\backslash \mathbb{P}(\{0\}\oplus\bU_0)\right).$$
As before, $\bE_0$ is a closed subspace of $\bE$, the orbit $\bT_1  y$ lies in $\bE_0$ and both $\bE$ and $\bE_0$ are affine.

By taking an orbit in the closure of $\bT_1 y$ if necessary, we may assume that $\bT _1 y$ is closed in $\bE_0$. Then, by a well-known
result of Richardson (cf. \cite{paul12} and also \cite{tian13}), there is an one-parameter algebraic subgroup $\bG_0$ of $\bT_1$ such that the closure of $\bG_0 x$ contains a
point in $\bE_0$ which is a subset of $\left (\mathbb{P}(\bV\oplus \bW)\backslash \mathbb{P} (\{0\}\oplus \bW)\right )\times \mathbb{P}(\{0\}\times \bU)$.
By Lemma \ref{lemm:k-5} and Theorem \ref{th:k-2}, this contradicts to the K-stability of $M$. Thus, we have completed
the proof of Theorem \ref{th:main} in general cases.

%%%%%%%%%%%%%%
%%%%%%%%%%%%%%
\end{document}